\newtheorem{theorem}{Theorem}[section]
\theoremstyle{plain}
\newtheorem{corollary}[theorem]{Corollary}
\newtheorem{lemma}[theorem]{Lemma}
\newtheorem{proposition}[theorem]{Proposition}
\numberwithin{equation}{section}
\newcommand{\re}{\mathbb{R}}
\def \R{\mathbb R}
\def \S{\mathbb S}
\def \N{\mathbb N}
\begin{document}
\title[Low energy nodal solutions to the Yamabe equation on spheres]{Low energy nodal solutions to the  Yamabe equation }
\author{Juan Carlos Fern\'{a}ndez}
\address{Centro de Investigaci\'{o}n en Matem\'{a}ticas, CIMAT, Calle Jalisco s/n, 36023 Guanajuato, Guanajuato, M\'{e}xico}
\email{juan.fernandez@cimat.mx}
\author{Jimmy Petean}
\address{Centro de Investigaci\'{o}n en Matem\'{a}ticas, CIMAT, Calle Jalisco s/n, 36023 Guanajuato, Guanajuato, M\'{e}xico}
\email{jimmy@cimat.mx}%
\thanks{The authors were supported by grant 220074 of Fondo Sectorial de Investigaci\'{o}n para la Educaci\'{o}n SEP-CONACYT. J.C. Fern\'{a}ndez was supported by a postdoctoral fellowship of CONACYT}
\date{\today}

\begin{abstract}
Given an isoparametric function $f$ on the $n$-dimensional sphere, we consider the space of functions $w\circ f$ to reduce the Yamabe equation on the round sphere into a singular ODE on $w$ in the interval $[0,\pi]$, of the form $w''+(h(r)/\sin r)w'+\lambda(\vert w\vert^{4/n-2}w-w)=0$ and boundary conditions $w'(0)=0=w'(\pi)$, where $h$ is a monotone function with exactly one zero on $(0,\pi)$ and $\lambda>0$ is a constant. For any positive integer $k$ we obtain a solution with exactly $k$-zeroes yielding solutions to the Yamabe equation with exactly $k$ connected isoparametric hypersurfaces as nodal set. The idea of the proof is to consider the initial value problems on the singularities, and then to solve the corresponding {\it double shooting} problem, matching the values of $w$ and $w'$ at the unique zero of $h$. In particular we obtain solutions with exactly one zero, providing solutions of the Yamabe equation with low energy.

\textsc{Key words: } Singular ODE; Yamabe problem; nodal solution; isoparametric hypersurfaces; shooting method.

\textsc{2010 MSC: } 34B16, 35B06, 35B33,  53C21, 58J05.

\bigskip

\end{abstract}
\maketitle

\section{\textbf{Introduction}}

The aim of this article is to provide new examples of nodal solutions to the Yamabe equation on the round sphere with a prescribed number of nodal domains.

Given a compact Riemannian manifold $(M,g)$ without boundary of dimension
$n\geq3$, the Yamabe problem consists in finding a metric $\hat{g}$
conformally equivalent to $g$ with constant scalar curvature. In \cite{y}, H. Yamabe considered the infimum  of the (normalized) total scalar functional restricted to the conformal class $[g]$, consisting of all metrics conformal to $g$,
\[
Y(M,[g]):=\inf_{h\in[g]}\frac{\int_M s_h \  dV_h}{(Vol(M,h))^{\frac{n-2}{n}}},
\]
where $dV_h$ denotes the volume element of $h$ and $s_h$ is the scalar curvature of $h$. The critical points of the total scalar curvature functional restricted to $[g]$ are
the metrics in $[g]$ which have constant scalar curvature. Yamabe attempted to prove
the existence of constant scalar curvature metrics in $[g]$ by showing that $Y(M, [g])$ is
realized. His proof contained a mistake but his statement was eventually proved to
be correct in a series of beautiful articles by N. Trudinger \cite{t}, T. Aubin \cite{a1} and R.
Schoen \cite{sch}.

Writing a conformal metric $h\in[g]$ as $h=u^{4/(n-2)}g$, where
$u\in\mathcal{C}^{\infty}(M),$ $u>0$, the problem turns out to be equivalent to solving the nonlinear PDE with critical exponent
\begin{equation} \label{yamabe}%
-\Delta_{g}u+\mathfrak{c}_{n}s_{g}u=\kappa\left\vert u\right\vert ^{p_n%
-1}u,\qquad u\in\mathcal{C}^{\infty}(M)\text{,}
\end{equation}
where $\Delta_{g}=\,$div$_{g}\nabla_{g}$ is the Laplace-Beltrami operator,
$\mathfrak{c}_{n}:=\frac{n-2}{4(n-1)}$,  $\kappa\in\mathbb{R}$, and $p_n:=\frac{n+2}{n-2}$ is the critical
Sobolev exponent. In fact, $h$ has constant scalar
curvature $\kappa$\ \ iff $u$ is a positive solution to this
problem. In these terms, the total scalar functional restricted to $[g]$ becomes the so called Yamabe functional
\[
Y(u):=\frac{\int_M \vert\nabla u\vert_g +\mathfrak{c}_{n}s_g u^2 dV_g}{\left(\int \vert u\vert^{p_n+1}dV_g\right)^{2/(p_n+1)}},
\]
and the Yamabe equation is the Euler-Lagrange equation of this functional. Solutions in general are not unique and there have been many results studying the set of positive solutions, see for instance \cite{b,bm,hv,kms,o,po}. Less is known about the existence and multiplicity of nodal solutions. If $u$ is a sign-changing solution to problem (\ref{yamabe}), then\textit{ }$h=\left\vert u\right\vert ^{4/(n-2)}g$ is not a
metric, as $h$ is not smooth and it vanishes on the set of zeroes of
$u$. B. Ammann and E. Humbert \cite{ah} called $h$ a \textit{generalized metric}. In
\cite{ah} and \cite{es}, the existence of at least one nodal solution with minimal energy was settled when the manifold is not locally conformally flat and its dimension is at least 11. To prove this, if $\lambda_i(g)$ denotes the $i$-th eigenvalue of the operator $L_g=-\Delta_{g}+\mathfrak{c}_{n}s_{g}$, the authors showed that the infimum
\[
\mu_2(M,[g]):=\inf_{h\in[g]}\lambda_2(h)Vol(M,h)^{2/n}
\]
is achieved by a generalized metric, the conformal factor being the absolute value of a nodal solution. This infimum is called \emph{the second Yamabe invariant}.

However, multiplicity of nodal solutions to the Yamabe problem
(\ref{yamabe}) is, largely, an open question. Other existence and multiplicity results have been obtained, for instance, in \cite{cfer2,hen,pet} in the case of products and in the presence of symmetries. In a classical paper \cite{di},
W.Y. Ding established the existence of infinitely many nodal solutions to this
problem on the standard sphere $\mathbb{S}^{n}.$ He took advantage of the fact
that $\mathbb{S}^{n}$ is invariant under the action of isometry groups whose
orbits are positive dimensional.

The Yamabe problem on the round sphere $(\S^n,g_0^n)$,
\begin{equation}\label{Eq:Yamabe sphere}
-\Delta_{g_0^n}u+\frac{n(n-2)}{4}u=\frac{n(n-2)}{4}\left\vert u\right\vert ^{p_n-1}u\qquad\text{ on }\mathbb{S}^n
\end{equation}
is crucial to understand the problem on other closed manifolds \cite{a1}. Unlike what happens with the positive solutions to this problem on the sphere, where Aubin \cite{a1} described them all, a classification of all the nodal solutions is far for being complete, and they present many interesting and diverse behaviours. For instance, in \cite{cfer2}, using variational methods, the authors showed that if $\Gamma$ is a compact group of isometries of the round $n$-dimensional sphere such that the every $\Gamma$-orbit has positive dimension, then the Yamabe problem \eqref{Eq:Yamabe sphere} has infinitely many sign changing $\Gamma$-invariant solutions. This result generalizes Ding's result when considering the
case  $\Gamma=O(k)\times O(m), m+k=n+1$, $m,k\geq 2$.
Other multiplicity results for this equation and small perturbations of \eqref{yamabe} were obtained using the Lyapunov-Schmidt reduction method, for instance, in \cite{dpmpp2,epv,mpv,mw,rv1}.

It is well known that the Yamabe problem on the sphere is equivalent, via the stereographic projection, to the Yamabe problem on $\R^n$,
\begin{equation}\label{Eq: Yamabe in R^n}
\left\{\begin{tabular}{cc}
$-\Delta u=\left\vert u\right\vert ^{p_n-1}u$ & in $\mathbb{R}^n$\\
$u\in D^{1,2}(\R^n),$ &
\end{tabular}\right.
\end{equation}
where $D^{1,2}(\R^n)$ denotes the completion of $\mathcal{C}_c^\infty(\R^n)$ with respect to the norm $\Vert u\Vert:=\int_{\R^n}\vert\nabla u\vert^2 dx$. Multiple nodal solutions to this problem have been constructed using the \emph{standard bubble}
\begin{equation}\label{Eq:Bubble}
U(x):=[n(n-2)]^{\frac{n-2}{4}}\frac{1}{[1+\vert x\vert^2]^{\frac{n-2}{2}}},
\end{equation}
as building block \cite{dpmpp,mw}. These solutions look like a positive bubble surrounded by $k$ negative bubbles for $k$ large enough and they differ for the ones obtained in \cite{cfer2}. Other completely different solutions were obtained very recently by M. Clapp \cite{c}, where the solutions were \emph{$\Gamma$-equivariant} for some suitable subgroups $\Gamma$ of $O(n+1)$.

\vspace{.5cm}

We will use a different approach to study problem \eqref{Eq:Yamabe sphere}, reducing it to an ODE with singular coefficients. This approach can be used under the presence of a  cohomogeneity one action or, more generally,  an \emph{isoparametric function} on the sphere. To state our main result, we briefly introduce these functions. They will be treated in more detail in Section \ref{Sec:The reduce equation}. If $(M^n,g)$ is a Riemannian manifold, then a smooth function $f:M\rightarrow[c,d]$ is an isoparametric function if there exist a continuous function $a$ and a smooth function $b$ such that
\[
\vert \nabla f\vert_g^2 = b\circ f\qquad\text{and}\qquad \Delta_gf=a\circ f.
\]
The level sets are called \emph{isoparametric hypersurfaces}. An immediate consequence of the definition of an isoparametric function is the following reduction of a PDE on $(M,g)$ into an ODE in a closed interval $[c,d]\subset\R$.
\begin{proposition}\label{Prop:Reduction}
Let $(M,g)$ be a closed Riemannian manifold and $f:M\rightarrow[c,d]$ be an isoparametric function with $c<d$, $\vert \nabla f\vert_g^2 = b\circ f$ and $\Delta_gf=a\circ f$. Then, for any function $\varphi:\R\rightarrow\R$, $v$ is a solution to the problem
\begin{equation}\label{Eq: Singular BVP}
-bv'' - av' = \varphi(v) \text{ in } [c,d],
\end{equation}
if and only if $u=v\circ f$ is a solution to the problem
\[
-\Delta_gu = \varphi(u)\quad \text{ on }\ M.
\]
\end{proposition}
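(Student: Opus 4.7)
The statement is essentially a chain-rule identity; the main computation is to express $\Delta_g(v\circ f)$ pointwise on $M$ in terms of $v', v''$ and the isoparametric data $a,b$, and then to read off the equivalence in both directions.

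First I would apply the chain rule to obtain $\nabla_g(v\circ f) = (v'\circ f)\,\nabla_g f$, and then invoke the Leibniz rule for the divergence, $\mathrm{div}_g(\psi X) = \psi\,\mathrm{div}_g X + g(\nabla_g\psi, X)$, with $\psi = v'\circ f$ and $X = \nabla_g f$. Since $\nabla_g(v'\circ f) = (v''\circ f)\nabla_g f$, this yields the pointwise identity
\[
\Delta_g(v\circ f) \;=\; (v''\circ f)\,|\nabla_g f|_g^{2} + (v'\circ f)\,\Delta_g f.
\]
Substituting the hypotheses $|\nabla_g f|_g^{2} = b\circ f$ and $\Delta_g f = a\circ f$ gives the key identity
\[
\Delta_g(v\circ f) \;=\; \bigl(b\,v'' + a\,v'\bigr)\circ f \qquad\text{on } M.
\]

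The forward direction is immediate: if $-b v'' - a v' = \varphi(v)$ on $[c,d]$, composing with $f$ and using the identity above shows that $u = v\circ f$ satisfies $-\Delta_g u = \varphi(u)$ on $M$. For the converse, the same identity gives
\[
\bigl(-b v'' - a v' - \varphi(v)\bigr)\circ f \;\equiv\; 0 \quad\text{on } M,
\]
so one needs $f$ to be surjective onto $[c,d]$ in order to conclude that the bracket vanishes on the whole interval. This is the only point requiring care, and it follows at once from the fact that $M$ is closed (hence $f(M)$ is compact and connected) together with the convention that $[c,d]$ is the image of $f$; the endpoints $c$ and $d$ are the min and max of $f$.

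I do not anticipate any real obstacle: the proof reduces to the chain/Leibniz rules together with a one-line surjectivity argument. The only implicit assumption to flag is that $v$ is $C^2$ on $[c,d]$ so that the compositions and derivatives above make classical sense; weaker regularity would force a distributional reformulation, but that is not needed for the statement as given.
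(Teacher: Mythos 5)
Your proof is correct and follows essentially the same route as the paper, which simply cites the identity $\Delta_g(v\circ f) = (v''\circ f)\vert\nabla f\vert_g^2 + (v'\circ f)\Delta_g f$ and says the proposition follows directly. Your additional remark that the converse requires $f$ to be surjective onto $[c,d]$ is a reasonable point of care that the paper leaves implicit.
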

The proof of this Proposition follows directly from the identity  $\Delta_g(v\circ f) = (v''\circ f)\vert \nabla f\vert_g^2 + (v'\circ f)\Delta_g f$.

In the case of space forms, E. Cartan \cite{ca1} proved that a hypersurface $M$ is isoparametric (according to the
previous definition) if and only if it has constant principal curvatures. The orbits of a cohomogeneity one action are
examples of isoparametric hypersurfaces and they are called homogeneous. The theory of isoparametric hypersurfaces
in the sphere $(\S^n,g_0^n)$ is very rich (Cf. \cite{cr} for details). If we denote by $\ell$ the number of distinct principal curvatures, H. F. M\"{u}nzner showed that $\ell=1, 2,
3, 4$ or $6$, and if $\ell$ is odd, all the multiplicities of the principal curvatures are the same, while if $\ell$ is even, there are, at most, two different multiplicities $m_1$ and $m_2$ \cite{m1,m2}. In section \ref{Sec:The reduce equation}, using the above proposition and the special properties of isoparametric functions on the round sphere \cite{cr}, we  reduce problem \eqref{Eq:Yamabe sphere} into the following singular ODE
\begin{equation}\label{Eq:Singular BVP sphere radial}
w'' + \frac{h(r)}{\sin r}w' + \frac{n(n-2)}{4 \ell^2 } [\vert w\vert^{p_n-1}w-w] = 0 \text{ on } [0,\pi],
\end{equation}
where $h(r)=\frac{m_1+m_2}{2}\cos r - \frac{m_2-m_1}{2}$. In fact, we will show that if $w$ is a solution to this equation,
with $w'(0)=w'(\pi )=0$, then $u=w(\arccos f)$ is a solution  to the Yamabe equation on the sphere \eqref{Eq:Yamabe sphere}. The case $l=1$, and then $m_1 =m_2 = n-1$, corresponds to the cohomogeneity one action of $O(n)$ on $S^n$ fixing an axis: the corresponding functions $w$ are then called {\it radial} or
{\it axially symmetric} and it is well known that, in this case,
equation \eqref{Eq:Singular BVP sphere radial} has positive solutions but no nodal solution. In all other cases
$m_1 , m_2 <n-1$.

We will prove:

\begin{theorem}\label{Th:Nodal Yamabe ODE}
If $m_1, m_2<n-1$, then equation \eqref{Eq:Singular BVP sphere radial} with boundary conditions $w'(0)=w'(\pi)=0$ admits a sequence of sign changing solutions $w_k$ having exactly $k$ zeroes in $[0,\pi]$
\end{theorem}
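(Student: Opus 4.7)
The plan is to execute the double-shooting argument sketched in the abstract. Set $\lambda := n(n-2)/(4\ell^2)$; since $h(r) = \frac{m_1+m_2}{2}\cos r - \frac{m_2-m_1}{2}$ decreases strictly from $h(0)=m_1$ to $h(\pi)=-m_2$, there is a unique $r_0 \in (0,\pi)$ with $h(r_0)=0$. Near $r=0$, $h(r)/\sin r \sim m_1/r$; near $r=\pi$, $h(r)/\sin r \sim -m_2/(\pi-r)$, so equation \eqref{Eq:Singular BVP sphere radial} has regular singular points of Bessel type at both endpoints. A direct rescaling computation shows that the subcritical threshold for the associated Emden--Fowler limit equation at each singularity is exactly $m_i < n-1$, which is the standing hypothesis.

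Step 1 (well-posed IVPs at the singularities). For $\alpha \in \R$, I would rewrite the ODE near $r=0$ as $(r^{m_1}w')' = -r^{m_1}[\lambda(|w|^{p_n-1}w-w) + g(r)w']$, with $g$ a smooth remainder arising from $h(r)/\sin r - m_1/r$, and apply a Banach fixed-point argument on the equivalent integral equation to obtain a unique $C^1$ local solution $w_\alpha^L$ with $w_\alpha^L(0)=\alpha$, $(w_\alpha^L)'(0)=0$, depending continuously on $\alpha$. Away from the singular endpoints the coefficient $h/\sin r$ is bounded, so standard continuation together with an energy estimate (obtained by multiplying the equation by $w'$) extends the solution to all of $[0,r_0]$. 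An analogous construction on the right yields $w_\beta^R$ on $[r_0,\pi]$ with $w_\beta^R(\pi)=\beta$ and $(w_\beta^R)'(\pi)=0$.

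Step 2 (oscillation estimates). A solution of the BVP amounts to an intersection of the shooting curves $\Gamma_L(\alpha) := (w_\alpha^L(r_0),(w_\alpha^L)'(r_0))$ and $\Gamma_R(\beta) := (w_\beta^R(r_0),(w_\beta^R)'(r_0))$ in $\R^2$, so matching is a two-parameter / two-equation problem. By oddness of the equation in $w$ it suffices to consider $\alpha > 0$. Writing solutions in polar form $w = \rho\cos\theta$, $w' = -\rho\sin\theta$, the number of zeros of $w_\alpha^L$ on $[0,r_0)$ is controlled by the accumulated phase $\theta$. The key fact is that under the rescaling $\widetilde w(s) := \alpha^{-1}w_\alpha^L(\alpha^{-(p_n-1)/2}s)$ the equation converges on compact sets as $\alpha \to \infty$ to the subcritical Emden--Fowler equation $\widetilde w_{ss} + (m_1/s)\widetilde w_s + \lambda|\widetilde w|^{p_n-1}\widetilde w = 0$, whose nontrivial solutions oscillate infinitely often precisely because $m_1 < n-1$. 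Since the interval $[0,r_0]$ rescales to $[0,\alpha^{(p_n-1)/2}r_0] \to [0,\infty)$, the number of zeros of $w_\alpha^L$ on $[0,r_0)$ tends to infinity, and the analogous statement holds for $w_\beta^R$.

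Step 3 (matching and conclusion). For each $k \geq 1$, I would obtain a solution with exactly $k$ zeros by a winding / Brouwer-degree argument applied to $\Psi(\alpha,\beta) := \Gamma_L(\alpha) - \Gamma_R(\beta)$. The idea is to partition the $(\alpha,\beta)$-plane by the total zero count $N_L(\alpha) + N_R(\beta) + \chi$, with $\chi \in \{0,1\}$ recording whether $w(r_0) = 0$; on the boundary of the region where this equals $k$ one shows $\Psi$ has nontrivial winding around $0$, producing the required matched pair. The main obstacle is precisely this step: the curves $\Gamma_L$ and $\Gamma_R$ are spirals whose winding numbers grow without bound by Step 2, and one has to verify that each arm of $\Gamma_L$ genuinely crosses the corresponding arm of $\Gamma_R$ rather than merely accumulating near it, and with the correct total count. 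This requires sufficient monotonicity of the radius $\rho$ in the shooting parameter along each spiral arm together with careful phase-plane bookkeeping, and constitutes the technical heart of the proof.
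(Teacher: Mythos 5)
Your Steps 1 and 2 reproduce the paper's setup faithfully: local well-posedness at the singular endpoints by a fixed-point argument, global existence on $[0,a_0]$ via the decreasing energy $E=\tfrac12 (w')^2+G(w)$, and unbounded oscillation as the shooting parameter grows, obtained by rescaling to the limit equation $v''+\frac{m_1}{s}v'+\lambda|v|^{p-1}v=0$, whose solutions oscillate infinitely often exactly when $\frac{m_1+1}{2}<\frac{p_n+1}{p_n-1}$, i.e.\ $m_1<n-1$. This is Section 3 of the paper (the $C^1$-convergence of the rescaled solutions to the limit solution, plus the Haraux--Weissler oscillation theorem).

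The genuine gap is Step 3, which you yourself flag as the ``technical heart'' and leave unproven, and the route you sketch is not the one that works. Monotonicity of the radius $\rho$ along each spiral arm of $\Gamma_L$, $\Gamma_R$ is neither established nor needed; what the paper actually uses is a global geometric input you never invoke: by Aubin's classification, every positive solution of the Yamabe equation on $\S^n$ is axially symmetric, so under the hypothesis $m_1,m_2<n-1$ a matched solution without zeroes cannot exist except the constant one. In the argument--radius half-plane this says the two shooting curves $R(d)=(\theta(d),|I(d)|)$ and $S(c)=(\vartheta(c),|J(c)|)$ are simple and meet \emph{only} at $(0,1)$; hence $T=R\cup S$ separates $\R\times\R_{>0}$ into two components (Jordan curve theorem), and since the translate $R-k(\pi,0)$ cannot meet $S$ (as $\vartheta>-\pi/2$) nor $R$ (simplicity), the translate $T-k(\pi,0)$ lies on one side and must cross $T$, forcing $R$ to intersect $S-k(\pi,0)$ and yielding a matched solution whose zero count is read off from the total phase $\vartheta(c_S)-\theta(d_R)=k\pi$. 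Without the classification input, your degree argument for $\Psi(\alpha,\beta)$ has no computable winding on the boundary of your region and can fail. Moreover, the paper must separately treat the degenerate case in which the exit-radius sequence $x_i=|I(d_i)|$ (or $y_j$) is \emph{decreasing}: there the desired solution is not produced by a transversal crossing but as the endpoint of a connected component of an open set of shooting parameters, and one must then prove (Lemma 4.7 and the two lemmas of Appendix A) that the limiting solution is monotone and bounded near the far singularity, extends to a $C^2$ function on $[0,\pi]$, and satisfies $w'(\pi)=0$. None of this bookkeeping --- the only-one-intersection lemma, the translated-curve topology, and the decreasing-sequence regularity analysis --- is present in your proposal, so as written it does not yet prove the theorem.
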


The function $h$ appearing in equation \eqref{Eq:Singular BVP sphere radial} has a unique zero $a_0 \in (0,\pi )$. To
prove Theorem \ref{Th:Nodal Yamabe ODE} we will consider the solutions $w_d$, $\widetilde{w}_c$ of
equation \eqref{Eq:Singular BVP sphere radial} with initial conditions $w_d'(0) = \widetilde{w}_c ' (\pi )=0$,
$w_d (0)=d$, $\widetilde{w}_c (\pi )=c$ and consider the maps $I(d) = (w_d (a_0 ), w_d ' (a_0 ))$ and
$J(c) =(\widetilde{w}_c (a_0 ), \widetilde{w}_c ' (a_0 ))$. $I, J : \re \rightarrow \re^2$ and if $I(d) = J(c)$ then
$w_d = \widetilde{w}_c$ is a solution of equation \eqref{Eq:Singular BVP sphere radial} with $w_d'(0) = w_d '(\pi )=0$.
To understand the intersections of the curves $I, J$ one needs information of the functions $w_d$, $\widetilde{w}_c$.
In Section 3 we will prove that for large $d$ and $c$ these functions have many zeroes close to $0$ and $\pi$, respectively.
This will be used in Section 4 to solve a \emph{double shooting problem} and in this way prove Theorem \ref{Th:Nodal Yamabe ODE}.

As a consequence of Theorem \ref{Th:Nodal Yamabe ODE} we obtain a multiplicity result for the Yamabe problem \eqref{Eq:Yamabe sphere}.

\begin{theorem}\label{Th:Main}
Let $S\subset \S^n$ be an isoparametric hypersurface with $\ell\neq 1$ different principal curvatures. Then, for any positive integer $k$, the Yamabe problem \eqref{Eq:Yamabe sphere} admits  a nodal solution $u_k$ such that its nodal set has exactly $k$ connected components, each of them is an isoparametric hypersurface diffeomorphic to $S$. Moreover,
\begin{equation}
\int_{\S^n}\vert u_k\vert^{p_n+1}dV_{g_0^n}\rightarrow\infty\quad\text{as}\quad k\rightarrow\infty.
\end{equation}
\end{theorem}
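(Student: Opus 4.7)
The plan is to deduce Theorem \ref{Th:Main} directly from Theorem \ref{Th:Nodal Yamabe ODE} via the reduction recorded in Proposition \ref{Prop:Reduction}. Since $S$ is isoparametric with $\ell \neq 1$ distinct principal curvatures, the Cartan--M\"unzner theory recalled in Section \ref{Sec:The reduce equation} provides an isoparametric function $f : \S^n \to [-1,1]$ having $S$ as a regular level set and with multiplicities $m_1, m_2 < n-1$. Applying Theorem \ref{Th:Nodal Yamabe ODE} with these multiplicities yields, for every $k \geq 1$, a solution $w_k$ of \eqref{Eq:Singular BVP sphere radial} satisfying $w_k'(0) = w_k'(\pi) = 0$ and having exactly $k$ zeros in $[0,\pi]$, so that $u_k := w_k \circ \arccos \circ f$ is a sign-changing solution of \eqref{Eq:Yamabe sphere}.

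The next step is to verify that the zeros $r_1 < \cdots < r_k$ of $w_k$ all lie in the open interval $(0,\pi)$. By standard local existence and uniqueness at the regular singular points $r=0$ and $r=\pi$ (where $h(r)/\sin r$ has a simple pole with residue $m_1$ and $-m_2$, respectively), the solution of \eqref{Eq:Singular BVP sphere radial} satisfying $w'(0)=0$ is uniquely determined by the initial datum $w(0)$, and similarly at $r=\pi$. As $w \equiv 0$ is a solution, neither $w_k(0)=0$ nor $w_k(\pi)=0$ is compatible with $w_k \not\equiv 0$; moreover the shooting construction of Section 4 produces $w_k$ precisely by matching non-zero initial data at the two endpoints. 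Consequently, the nodal set of $u_k$ is
\[
Z(u_k) \;=\; \bigsqcup_{i=1}^{k} f^{-1}(\cos r_i),
\]
and each $\cos r_i \in (-1,1)$ is a regular value of $f$, the critical values being precisely the endpoints $\pm 1$ at which the two focal submanifolds lie. Hence every $f^{-1}(\cos r_i)$ is a principal leaf of the isoparametric foliation, which by the structure theory on the sphere is an isoparametric hypersurface diffeomorphic to $S$; this gives exactly $k$ connected components of the required form.

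For the energy blow-up, I would invoke the classical nodal-domain lower bound. On any connected component $\Omega$ of $\{u_k \neq 0\}$ the function $u_k$ vanishes on $\partial \Omega$ and satisfies the Yamabe equation, so testing the equation against $u_k$ on $\Omega$ and combining the resulting identity with the sharp Aubin--Talenti Sobolev inequality produces a dimensional constant $C_n > 0$ such that
\[
\int_\Omega |u_k|^{p_n+1}\, dV_{g_0^n} \;\geq\; C_n.
\]
Summing over the $k+1$ nodal domains of $u_k$ yields $\int_{\S^n} |u_k|^{p_n+1}\, dV_{g_0^n} \geq (k+1) C_n$, which tends to $+\infty$ as $k \to \infty$.

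Once Theorem \ref{Th:Nodal Yamabe ODE} is in hand, this argument is essentially a bookkeeping of the reduction; the only genuinely non-trivial additional point is the interior-zero verification of the second step, which is what ensures that every nodal component is a \emph{regular} leaf of the foliation rather than a lower-dimensional focal submanifold. The substantive difficulty of the paper therefore sits upstream, in the shooting analysis that produces the family $\{w_k\}$.
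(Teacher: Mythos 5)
Your proposal is correct and follows essentially the same route as the paper: deduce existence from Theorem \ref{Th:Nodal Yamabe ODE} together with Proposition \ref{Prop:Reduction} and the connectedness of the isoparametric level sets, then obtain the energy blow-up from a uniform per-nodal-domain lower bound (the paper cites this as a standard argument with constant $c_n = \mathrm{Vol}(\S^n)$, which is exactly your test-function-plus-Sobolev estimate) summed over the $k+1$ nodal domains. Your additional verification that the zeros of $w_k$ lie in the open interval $(0,\pi)$ — so that no nodal component degenerates to a focal submanifold — is a point the paper leaves implicit (it holds because the shooting construction takes $|d|,|c|>1$), and it is a worthwhile clarification.
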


\begin{proof} The level sets of an isoparametric function on $\S^n$ are connected and divide $\S^n$
in two open connected components (see, for example, \cite{cr}). The existence of the solution $u_k$ is then a direct consequence
of Theorem \ref{Th:Nodal Yamabe ODE} and Proposition \ref{Prop:Reduction}.
The solution  $u_k$ has exactly $k+1$ nodal domains, which we will denote by $\Omega_i$, $i=1,\ldots,k+1$. The functions $u_{k,i}:=u_k1_{\Omega_i}\in H_{g_0^n}^1(\S^n)$, where $1_{\Omega_i}(q)=1$ if $q\in\Omega_i$ and it is zero otherwise \cite[Lemma 1]{mpf}. Define
\[
c_n:=\inf\{\int_{\S^n}\vert u\vert^{p_n+1}dV_{g_0^n}\;:\; u \text{ is a solution to }\eqref{Eq:Yamabe sphere}\}.
\]
This infimum is actually attained by the function $u\equiv 1$, so that $c_n=Vol_{g_0^n}(\S^n)>0$ \cite{a1}. A standard argument \cite[Lemma 2.4]{aw} shows that $\int_{\S^n}\vert u_{k,i}\vert^{p_n+1} dV_{g_0^n}\geq c_n$ for each $i=1,\ldots,k+1$. So
\[
\int_{\S^n}\vert u_k\vert^{p_n+1}dV_{g_0^n}=\sum_{i=1}^{k+1}\int_{\S^n}\vert u_{k,i}\vert^{p_n+1}dV_{g^n_0}\geq (k+1)c_n.
\]

\end{proof}

The classification of all the isoperimetric hypersurfaces on $\S^n$ is a very hard problem and it was posed by Cartan in 1939 \cite{ca1} and restated as Problem 34 of S. T. Yau's list of important open problems in geometry in 1990 \cite{yau}. A complete classification of all isoparametric hypersurfaces on the sphere is now available (see \cite{ch,miy2,miy3} and the references therein). For a book form presentation of this topic, we recommend \cite{cr}. Isoparametric hypersurfaces with one principal curvature in $S^n$ are the orbits of the action of $O(n)$ fixing some chosen point and its antipode, and the corresponding solutions to the Yamabe equation are the radial solutions, which are all positive \cite{a1,ta}. Writing $\R^{n+1}\equiv\R^{k}\times\R^{m}$ for $m+k=n+1,m,k\geq 2$, the isoparametric hypersurfaces with exactly two principal curvatures (i.e. $\ell =2$) are diffeomorphic to the product of spheres
\[
S_r^{k-1}\times S_s^{m-1}=\{(x,y)\in\R^{k}\times\R^{m}\;:\;\vert x\vert = r^2, \vert y\vert^2=s^2, r^2+s^2=1\}\subset\S^n,
\]
and they are the orbits of the isometric actions of $O(k)\times O(m)$ on $\S^n$. So, our result can be seen also as a generalization of Ding's result in case $\ell=2$ \cite{di}.

Next, note that for a solution $u$ of \eqref{Eq:Yamabe sphere} we have
\begin{equation}\label{Eq:Yamabe functional solutions}
Y(u)=\frac{n(n-2)}{4}\left( \int_{\S^n} \vert u\vert^{p_n+1}dV_{g^n_0}  \right)^{\frac{2}{n}}
\end{equation}
If $u$ is a solution to \eqref{Eq:Yamabe sphere}, we will refer to the quantity $\mathcal{E}(u):=\int_{\S^n}\vert u\vert^{p_n+1}dV_{g_0^n}$ as the \emph{energy of $u$}. Then, our result states that $\mathcal{E}(u_k)\rightarrow\infty$ as $k\rightarrow\infty$, the same as in Ding's result. As it was already mentioned in the proof of Theorem \ref{Th:Main}, the infimum of these energies, which we denoted by $c_n$, is attained by the constant function $u\equiv 1$. Now we address the question whether a least energy nodal solution exists or not. If the second Yamabe invariant
of a Riemannian manifold $(M,g)$ is
realized, then in \cite{ah} Ammann and Humbert proved that there is a nodal solution with least energy. But this is not
the case of the round sphere, for this infimum is never attained by a generalized metric \cite[Proposition 5.3]{ah}. As it was shown in the proof of Theorem \ref{Th:Main}, if a solution $u$ of \eqref{Eq:Yamabe sphere} changes sign, then $\mathcal{E}(u)\geq 2c_n$. However, T. Weth proved in \cite{weth} that $2c_n$ can not be the infimum of the energies, for he showed the existence of $\varepsilon>0$ such that $\mathcal{E}(u)>\varepsilon+2c_n$ for every sign-changing solution to the problem \eqref{Eq:Yamabe sphere}. It is not clear whether the $\varepsilon$ in Weth's result is large or not. For example, for the construction given by del Pino et al. \cite{dpmpp}, when the solution is constructed as a sum of one positive bubble surrounded by $k$ negative bubbles, the energy is $\mathcal{E}(u)=(k+1)c_n + O(1)$, where $O(1)$ remains bounded as $k\rightarrow\infty$. So, the energy of these solutions is, at least, $k+1$ times $c_n$, but this estimate is available only for $k$ large enough and nothing is said about energies with $k$ small. Indeed, a solution with only two bubbles, one positive and one negative, can not exist, as it was shown by F. Robert and J. V\'{e}tois in \cite{rv1}. Observe this is compatible with Weth's and Ammann-Humbert's results. Some low energy sign changing solutions were obtained by Clapp in \cite{c}. Here, the solutions have the least energy among all $\Gamma$-equivariant functions, where $\Gamma$ is a suitable subgroup of isometries of $O(n+1)$, but no energy estimates are available. Theorem \ref{Th:Main} provides several examples of low energy nodal solutions, namely, the solutions having exactly one isoparametric hypersuface as nodal set. It would be very interesting to see if one of this solutions is actually the least energy sign changing solution to the Yamabe problem on the sphere, and to compare this number with $c_n$.

Our solutions to problem the \eqref{Eq:Yamabe sphere} have the advantage that they can be computed numerically, and so their energies. We present some numerical computations for the case $\ell=2$, that is, when the isoparametric hypersufaces are the orbits of the action of $O(m)\times O(k)$ with $k+m=n+1$ and $k,m\geq 2$. In the following list we present the approximated values of the energy of the positive solution and we compare it with the energy of the solutions to some examples computed using the software $Mathematica$, for $n\leq 7$.
\begin{center}
\begin{tabular}{|c|c|c|c|c|c|}\hline
\emph{$n$} & \emph{$k$} & \emph{$m$} & \emph{$c_n$} & \emph{$\mathcal{E}$} & \emph{$\mathcal{E}$}/\emph{$c_n$}$\thickapprox$\\
\hline
3 & 2 & 2 & 19.7 & 326 & 16\\
\hline
4 & 2 & 3 & 26.3 & 362 & 13\\
\hline
5 & 2 & 4 & 31 & 370 & 12\\
\hline
5 & 3 & 3 & 31 & 509 & 16\\
\hline
6 & 2 & 5 & 33 & 350 & 10\\
\hline
6 & 3 & 4 & 33 & 535 & 16\\
\hline
7 & 2 & 6 & 32.4 & 320 & 10\\
\hline
7 & 3 & 5 & 32.4 & 492 & 15\\
\hline
7 & 4 & 4 & 32.4 & 566 & 17\\
\hline
\end{tabular}
\end{center}

This paper is organized as follows. In Section \ref{Sec:The reduce equation} we study in more detail the reduction of the PDE \eqref{Eq:Yamabe sphere} into the singular ODE \eqref{Eq:Singular BVP sphere radial} and we discuss how
to compute the energy of the solutions. In Section 3, following the argument given in \cite{McLeod}, we state and prove a theorem that guarantees the existence of solutions to problem \eqref{Eq:Singular BVP sphere radial} with arbitrarily large number of zeroes in any interval of the form $[0,A]$ and $[B,\pi]$, where $A\in(0,a_0)$ and $B\in(a_0,\pi)$. In Section \ref{Sec:Proof main}, using this result and performing a double shooting method, we prove Theorem \ref{Th:Nodal Yamabe ODE}.

\section{The reduced equation and the energy of solutions}\label{Sec:The reduce equation}

Let $f:(\S^n,g_0^n)\rightarrow\R$ be an isoparametric function. We can assume that $f$ is actually a
Cartan-Munzner polynomial (Cf. \cite{cr,hp} for details). In this situation the image of f is $[-1,1]$. As in the introduction,
we let $\ell $ be the number of distinct principal curvatures of the level sets of $f$ and let $m_1$ and $m_2$ be the two (possibly equal) multiplicities of the principal curvatures. Note that $\frac{n-1}{\ell}=\frac{m_1+m_2}{2}$ when $\ell$ is even. Then, one obtains (see \cite{hp} for details)
\[
\vert\nabla f\vert^2_{g_0^{n}}=-\ell^2f^2+\ell^2\qquad\text{and}\qquad\Delta_{g_0^n}f=-\ell(n+\ell-1)f+\frac{\ell^2(m_2-m_1)}{2}
\]
and using Proposition \ref{Prop:Reduction}, we reduce equation \eqref{Eq:Yamabe sphere} into the following ODE
\begin{equation}\label{Eq:Singular BVP sphere}
b(t)v'' + a(t)v' + \frac{n(n-2)}{4} [\vert v\vert^{p_n-1}v-v] = 0 \text{ on } [-1,1],
\end{equation}
where $a(t):=-\ell(n+\ell-1)t+\frac{\ell^2(m_2-m_1)}{2}$ and $b(t):=-(\ell^2t^2-\ell^2)$.
After the change of variables $w(r):=v(\cos r)$, problem \eqref{Eq:Singular BVP sphere} is equivalent to solving problem \eqref{Eq:Singular BVP sphere radial}. Therefore, a solution $w$ to problem \eqref{Eq:Singular BVP sphere radial} induces a solution $u:=w(\arccos f)$ to problem \eqref{Eq:Yamabe sphere}.

Observe this equation becomes singular at $r=0$ and $r=\pi$, and that the natural boundary conditions in order to obtain a smooth solution on $\S^n$ are $w'(0)=w'(\pi)=0$. Also notice that the function $h$ satisfies $h(0)=m_1$, $h(\pi ) = -m_2$, is strictly decreasing, has a unique zero $a_0\in(0,\pi)$ and $h(r)>0$ in $[0,a_0)$, while $h(r)<0$ in $(a_0,\pi]$. Moreover, the function $\widetilde{h}(r):=-h(\pi-r)=\frac{m_1 + m_2}{2} \cos r +\frac{m_2 - m_1}{2}$ has the same properties with $m_1$ and $m_2$ interchanged and a unique zero at $\pi-a_0$. To handle both singularities in \eqref{Eq:Singular BVP sphere radial} at the same time, the strategy is to shoot solutions from each singularity and expect that, for some suitable initial and final conditions, the solutions coincide. That is, we consider the initial value problem
\begin{equation}\label{Eq:Singular forward}
\left\{\begin{tabular}{cc}
$w_{i}''(r) + \frac{h(r)}{\sin r} w_i'(r) + \frac{n(n-2)}{4 \ell^2 }( |w_i(r)|^{p_n-1} w_i -w_i) =0$& in $[0,a_0]$,\\
$w_i(0)=d,\  w_i'(0)=0,$ &
\end{tabular}\right.
\end{equation}
and the ``final'' value problem
\begin{equation}\label{Eq:Singular backward}
\left\{\begin{tabular}{cc}
$w_f''(r) + \frac{h(r)}{\sin r} w_f'(r) + \frac{n(n-2)}{4 \ell^2}( |w_f(r)|^{p_n-1} w_f -w_f) =0$& in $[a_0,\pi]$,\\
$w_f(\pi)=c,\  w_f'(\pi)=0,$ &
\end{tabular}\right.
\end{equation}
and we look for initial and final conditions $d$ and $c$ such that $w_i(a_0,d)=w_f(a_0,c)$ and $w'_i(a_0,d)=w'_f(a_0,c)$, so that, by uniqueness of the solution, we have a well defined solution to problem \eqref{Eq:Singular BVP sphere radial} given by $w(r)=w_i(r,d)$ if $r\in[0,a_0]$ and $w(r)=w_f(r,c)$ if $r\in[a_0,\pi]$. To construct the solutions with an arbitrarily large number of zeroes, we will need to use that the number of zeroes before and after $a_0$ grows as $\vert d\vert,\vert c\vert\rightarrow\infty$. We will prove in Section 3 that this is the case.

Actually, problem \eqref{Eq:Singular backward} can be written as an initial condition problem having the form  \eqref{Eq:Singular forward}. Indeed, if we consider the function $\widetilde{h}(r)=-h(\pi-r)=\frac{m_1+m_2}{2}\cos r + \frac{m_2-m_1}{2}$, then $w_f$ solves \eqref{Eq:Singular backward} if and only if $\omega(r)=w_f(\pi-r)$ solves the initial value problem
\begin{equation}\label{Eq:Singular backward equivalent}
\left\{\begin{tabular}{cc}
$\omega''(r) + \frac{\widetilde{h}(r)}{\sin r} \omega'(r) + \frac{n(n-2)}{4 \ell^2}( |\omega(r)|^{p_n-1} \omega -\omega) =0$& in $[0,\pi-a_0]$,\\
$\omega(0)=c,\  \omega'(0)=0,$ &
\end{tabular}\right.
\end{equation}
So understanding equation \ref{Eq:Singular forward} is enough to also understand equation \ref{Eq:Singular backward}.

\vspace{1cm}

Now we discuss how to compute the energy of the solutions when $\ell=2$. In this case, the function $f:\S^n\rightarrow[-1,1]$ is given by $f(x,y)=\vert x\vert^2 - \vert y\vert^2$ and
the level set are  the product spheres
\[
\S^{m-1}_{\cos t}\times\S^{k-1}_{\sin t}:=\{(x,y)\in\R^m\times\R^k\;:\; \vert x\vert^2=\cos^2 t, \vert y\vert^2=\sin^2 t \},\quad t\in(0,\frac{\pi}{2}),
\]
(see, for example \cite[Chapter 3]{cr}). The corresponding multiplicities are $m_1=m-1$ and $m_2=k-1$ and the focal submanifolds are $M_0:=\S^{m-1}\times\{0\}$ and $M_{\pi/2}:=\{0\}\times \S^{k-1}$. Denote by $g_{0}^{k-1}$ and by $g_{0}^{m-1}$ the canonical metrics of $\S^{k-1}$ and $\S^{m-1}$ respectively. An explicit formula for the energy is given in the following proposition.

\begin{proposition}\label{Prop:Energy}
If $f:\S^n\rightarrow [-1,1]$ is an isoparametric function for $\ell=2$ as above, $w$ is a solution to the equation \eqref{Eq:Singular BVP sphere radial} and $u=w(\arccos f)$ is the corresponding solution to problem \eqref{Eq:Yamabe sphere}, then
\begin{equation}\label{Eq:Integral}
\int_{\S^n}\vert u\vert^{p_n+1}dV_{g_0^n}=\frac{1}{2}Vol(\S^{k-1})Vol(\S^{m-1})\int_{0}^{\pi}\vert w(r)\vert^{p_n+1}\sin^{k-1} \frac{r}{2}\cos^{m-1}\frac{r}{2}\; dr.
\end{equation}
\end{proposition}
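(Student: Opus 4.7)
The plan is to use the natural product parametrization of $\S^n \subset \R^m \times \R^k$ adapted to the level sets of $f$, and then apply Fubini's theorem combined with a change of variables. Explicitly, I would parametrize the open dense subset $\S^n \setminus (M_0 \cup M_{\pi/2})$ by the map
\[
\Phi : \S^{m-1}\times \S^{k-1} \times (0,\tfrac{\pi}{2}) \longrightarrow \S^n, \qquad \Phi(\theta_1,\theta_2,t) = (\cos t\,\theta_1,\;\sin t\,\theta_2).
\]
A direct computation of the induced metric shows that $\Phi^*g_0^n = dt^2 + \cos^2 t\,g_0^{m-1} + \sin^2 t\,g_0^{k-1}$, so the Riemannian volume element pulls back to $\cos^{m-1}t\,\sin^{k-1}t\;dt\,dV_{g_0^{m-1}}\,dV_{g_0^{k-1}}$. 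The focal submanifolds $M_0$ and $M_{\pi/2}$ have measure zero in $\S^n$, so they may be ignored in the integral.

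In these coordinates $f\circ\Phi(\theta_1,\theta_2,t)=\cos^2 t-\sin^2 t=\cos 2t$, hence $\arccos(f\circ\Phi)=2t$ and therefore $u\circ\Phi(\theta_1,\theta_2,t)=w(2t)$, a function depending only on $t$. By Fubini's theorem,
\[
\int_{\S^n}|u|^{p_n+1}\,dV_{g_0^n} = \mathrm{Vol}(\S^{m-1})\,\mathrm{Vol}(\S^{k-1})\int_0^{\pi/2}|w(2t)|^{p_n+1}\cos^{m-1}t\,\sin^{k-1}t\,dt.
\]
Finally, the substitution $r=2t$, $dt = dr/2$, turns $\cos^{m-1}t$ into $\cos^{m-1}(r/2)$ and $\sin^{k-1}t$ into $\sin^{k-1}(r/2)$ and changes the range to $[0,\pi]$, producing exactly the claimed formula with the overall factor $\tfrac{1}{2}$.

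There is no real obstacle here; the only place requiring a little care is verifying that the pullback metric has the stated warped-product form (equivalently, that the volume element of the spherical join is $\cos^{m-1}t\,\sin^{k-1}t\,dt$), which is a standard computation using orthonormal frames on $\S^{m-1}$ and $\S^{k-1}$ scaled by $\cos t$ and $\sin t$ respectively, together with the unit normal direction $\partial_t$. Everything else is bookkeeping.
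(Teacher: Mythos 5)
Your proposal is correct and follows essentially the same route as the paper: the same product parametrization $\varphi^{-1}(t,x,y)=(x\cos t, y\sin t)$ of $\S^n\smallsetminus(M_0\cup M_{\pi/2})$, the same warped-product form of the metric and volume element, the observation $f\circ\varphi^{-1}=\cos 2t$, Fubini, and the substitution $r=2t$. The only cosmetic difference is that the paper first isolates a general lemma for $\int_{\S^n}\psi\circ f\, dV_{g_0^n}$ and then specializes to $\psi=\vert v\vert^{p_n+1}$, whereas you inline that computation directly.
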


Proposition \ref{Prop:Energy} will follow from the following more general result.

\begin{lemma}
Let $\psi:[-1,1]\rightarrow\R$ be continuous, then
\[
\int_{\S^n}\psi\circ f dV_{g_0^n}=\frac{1}{2}Vol(\S^{m-1})Vol(\S^{k-1})\int_0^\pi\psi(\cos t)\cos^{m-1}\frac{t}{2}\sin^{k-1}\frac{t}{2}\;dt.
\]
\end{lemma}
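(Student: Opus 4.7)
The plan is to evaluate the integral by an explicit change of coordinates on $\S^n \subset \R^m\times\R^k$ that is adapted to the product structure of the level sets of $f$. Concretely, one uses the generalized spherical parametrization
\[
\Phi:[0,\pi/2]\times\S^{m-1}\times\S^{k-1}\to\S^n,\qquad \Phi(s,\xi,\eta)=(\cos s\cdot\xi,\,\sin s\cdot\eta).
\]
Then $f\circ\Phi(s,\xi,\eta)=\cos^2 s-\sin^2 s=\cos(2s)$ is independent of $(\xi,\eta)$, so the integrand $\psi\circ f$ reduces to $\psi(\cos 2s)$ and the integrals over $\S^{m-1}$ and $\S^{k-1}$ can be factored out once the volume form is computed.

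Next I would compute the pullback $\Phi^{*}dV_{g_0^n}$. At a point $(s,\xi,\eta)$, the three families of tangent vectors produced by $\Phi$ are mutually orthogonal in $\R^m\times\R^k$: $\partial_s\Phi=(-\sin s\cdot\xi,\cos s\cdot\eta)$ is a unit vector; a vector $v\in T_\xi\S^{m-1}$ pushes forward to $(\cos s\cdot v,0)$ of length $\cos s\,|v|$; a vector $u\in T_\eta\S^{k-1}$ pushes forward to $(0,\sin s\cdot u)$ of length $\sin s\,|u|$; orthogonality of the first family with the second and third uses $\xi\perp v$ and $\eta\perp u$, while the second and third are in complementary Euclidean factors. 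Hence
\[
\Phi^{*}dV_{g_0^n}=\cos^{m-1}s\,\sin^{k-1}s\;ds\,dV_{g_0^{m-1}}(\xi)\,dV_{g_0^{k-1}}(\eta).
\]
The map $\Phi$ is a diffeomorphism onto $\S^n$ minus the two focal submanifolds $M_0$ and $M_{\pi/2}$, which have measure zero, so this parametrization is valid for integration.

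Combining the two observations gives
\[
\int_{\S^n}\psi\circ f\,dV_{g_0^n}=Vol(\S^{m-1})Vol(\S^{k-1})\int_0^{\pi/2}\psi(\cos 2s)\cos^{m-1}s\,\sin^{k-1}s\,ds,
\]
and the substitution $t=2s$, $dt=2\,ds$, $s=t/2$ produces the factor $1/2$ together with $\cos^{m-1}(t/2)\sin^{k-1}(t/2)$ on the interval $[0,\pi]$, yielding exactly the stated identity. There is no real obstacle here; the only point that requires care is verifying the orthogonality of the three blocks of tangent vectors so that the Jacobian factor is precisely $\cos^{m-1}s\sin^{k-1}s$, after which the result follows from Fubini and a one-variable change of variables.
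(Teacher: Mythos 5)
Your proof is correct and follows essentially the same route as the paper: the parametrization $(s,\xi,\eta)\mapsto(\cos s\cdot\xi,\sin s\cdot\eta)$ is exactly the diffeomorphism the paper uses, and the computation of the volume form (which the paper phrases as writing $g_0^n=ds^2+\cos^2 s\,g_0^{m-1}+\sin^2 s\,g_0^{k-1}$), the observation that $f$ pulls back to $\cos 2s$, and the final substitution $t=2s$ all coincide. Your explicit verification of the orthogonality of the three blocks of tangent vectors is a slightly more detailed justification of the warped-product form of the metric that the paper simply asserts.
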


\begin{proof}
Observe we have a diffeomorphism $\varphi^{-1}:(0,\pi/2)\times\S^{m-1}\times\S^{k-1}\rightarrow\S^n\smallsetminus(M_0\cup M_{\pi/2})$ given by $\varphi^{-1}(t,x,y):=(x\cos t ,y\sin t )$. Then, we can write $g_0^n=dt^2+(\cos^2t)g_0^{m-1}+(\sin^2 t) g_0^{k-1}$  and $dV_{g_0^n}=\cos^{m-1}t\sin^{k-1}t dt\wedge dV_{g_0^{m-1}}\wedge dV_{g_0^{k-1}}$. Also observe that
\[
f\circ\varphi^{-1}(t,x,y)=\vert x \cos t \vert^2 - \vert y \sin t \vert^2 = \cos^2 t - \sin^2 t = \cos 2t.
\]
As $M_0\cup M_{\pi/2}$ has Lebesgue measure zero in $\S^n$, we have that
\begin{align*}
&\int_{\S^n}\psi\circ f dV_{g_0^n}=\int_{\S^n\smallsetminus(M_0\cup M_{\pi/2})}\psi\circ f dV_{g_0^n}\\
&=\int_{(0,\pi/2)\times\S^{m-1}\times\S^{k-1}}\psi\circ(f\circ\varphi^{-1})\cos^{m-1}t\sin^{k-1}t dt\wedge dV_{g_0^{m-1}}\wedge dV_{g_0^{k-1}}\\
&=\int_{\S^{m-1}}\left(\int_{\S^{k-1}}\left(\int_{0}^{\pi/2}\psi(\cos 2t)\cos^{m-1}t\sin^{k-1}t dt\right)dV_{g_0^{k-1}}\right)dV_{g_0^{m-1}}\\
&=Vol(\S^{m-1})Vol(\S^{k-1})\int_0^{\pi/2}\psi(\cos 2t)\cos^{m-1}t\sin^{k-1}t\; dt\\
&=\frac{1}{2}Vol(\S^{m-1})Vol(\S^{k-1})\int_0^{\pi}\psi(\cos r)\cos^{m-1}\frac{r}{2}\sin^{k-1}\frac{r}{2}\; dr,
\end{align*}
as we wanted.
\end{proof}

\begin{proof}[Proof of Proposition \ref{Prop:Energy}] Let $w$ be a solution to problem \eqref{Eq:Singular BVP sphere radial}. Then, $v:[-1,1]\rightarrow\R$ such that $w(r)=v(\cos r)$, is a solution to problem \eqref{Eq:Singular BVP sphere} and $u:=v\circ f$ is a solution to equation \eqref{Eq:Yamabe sphere}. If we define $\psi(t):=\vert v\vert^{p_n+1}$, then $\vert u\vert^{p_n}=\psi\circ f$ and the lemma yields
\begin{align*}
&\int_{\S^n}\vert u\vert^{p_n+1}dV_{g_0^n} \\
&= \frac{1}{2}Vol(\S^{m-1})Vol(\S^{k-1})\int_0^{\pi}\vert v(\cos r)\vert^{p_n+1}\cos^{m-1}\frac{r}{2}\sin^{k-1}\frac{r}{2}\: dr\\
&=\frac{1}{2}Vol(\S^{m-1})Vol(\S^{k-1})\int_0^{\pi}\vert w(r)\vert^{p_n+1}\cos^{m-1}\frac{r}{2}\sin^{k-1}\frac{r}{2}\; dr.
\end{align*}
\end{proof}

\section{Zeroes close to one of the singularities}

We can fit the initial value problem for equation \eqref{Eq:Singular BVP sphere radial} in a more general setting as follows: for constants $A>0$, $p>1$ and $\lambda>0$ and a positive $C^1$ function $H$ defined in the interval $[0,A]$, consider the following general initial condition problem

\begin{equation}\label{Eq:General singular}
\left\{\begin{tabular}{cc}
$w''(r) + \frac{H(r)}{r} w'(r) + \lambda( |w(r)|^{p-1} w -w) =0$ & in $[0,A]$\\
$w(0)=d,\  w'(0)=0.$ &
\end{tabular}\right.
\end{equation}

Equations \eqref{Eq:Singular BVP sphere radial} and \eqref{Eq:Singular backward equivalent} are special cases of the former by taking $\lambda=\frac{n(n-2)}{4 \ell^2}$, $p=p_n$, $H(r)=\frac{h(r)r}{\sin r}$ in $[0,A]$ with $A<a_0$ and $H(r)=\frac{\widetilde{h}(r)r}{\sin r}$ in $[0,A]$ with $A<\pi-a_0$. Observe, in this case, we are just dealing with the singularity at $r=0$.

As the function $H$ is positive and attains its maximum and its minimum in $[0,A]$, a standard contraction map argument yields to the local existence and uniqueness of the solutions to equation \eqref{Eq:General singular} with initial conditions $w(0)=d\in\R$ and $w'(0)=0$, depending continuously on $d$, see, for example \cite{Haraux,k}. For $d> 0$, let $w_d$ be the local solution with initial values $w_{d } (0) =d$ and  $w_{d } ' (0)=0$. To prove global existence, we introduce 
 \[
E(r):=\frac{(w_d'(r))^2}{2}+G(w_d(r)),
\]
with $G(t):=\lambda\left(\frac{\vert t\vert^{p_n+1}}{p_n+1}-\frac{t^2}{2}\right)$, which may be considered the energy function of the solution $w_d$. As $H$ is positive, then
\[
E'(r)=-\frac{H(r)}{r}(w'_d(r))^2\leq 0,
\]
for every $r\in[0,A]$. Thus, since $w_d(r)$ and $w'(r)$ can never blow up in $[0,A]$, the solution $w_d(r)$ exists in the whole interval.

This section is devoted to the proof of the following theorem:

\begin{theorem}\label{Prop:prescribed zeroes}
Suppose $H(0) >0$, $p>1$ and
\begin{equation}\label{Eq: Subcriticality ODE}
\frac{H(0) +1 }{2} < \frac{p+1}{p-1}.
\end{equation}
Then, for any $0<\varepsilon<A$ and  any positive integer $k$, there exists $d_k > 0$ so that the solution $w_{d }$ of \eqref{Eq:General singular} has at least $k$ zeroes in $(0,\varepsilon )$ for any $d\geq d_k$
\end{theorem}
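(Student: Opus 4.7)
\textbf{Proof plan for Theorem \ref{Prop:prescribed zeroes}.} The strategy is to rescale the equation so that its $d$-dependence is pushed into its coefficients, pass to a limiting Emden--Fowler equation, and use the fact that the limit equation oscillates infinitely often under the subcriticality hypothesis \eqref{Eq: Subcriticality ODE}. This is essentially the approach of \cite{McLeod}.

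\textbf{Rescaling and limit equation.} Given $d>0$, set
\[
v_d(s):=d^{-1}\, w_d\!\left(d^{-(p-1)/2}\, s\right).
\]
A direct computation shows that $v_d$ is defined on $[0,A\,d^{(p-1)/2}]$ and satisfies
\[
v_d''(s)+\frac{H(d^{-(p-1)/2}s)}{s}\,v_d'(s)+\lambda\,|v_d(s)|^{p-1}v_d(s)-\lambda\, d^{-(p-1)}\, v_d(s)=0,
\]
with $v_d(0)=1$, $v_d'(0)=0$. As $d\to\infty$, the coefficients converge uniformly on compact subsets of $[0,\infty)$ to those of the autonomous \emph{limit problem}
\[
\bar v''(s)+\frac{H(0)}{s}\,\bar v'(s)+\lambda\,|\bar v(s)|^{p-1}\bar v(s)=0,\qquad \bar v(0)=1,\ \bar v'(0)=0.
\]
Setting $N:=H(0)+1$, condition \eqref{Eq: Subcriticality ODE} rewrites as $p<\frac{N+2}{N-2}$ when $N>2$ (and is automatic when $N\le 2$); this is the Sobolev-subcritical regime for a pure power nonlinearity in ``dimension'' $N$.

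\textbf{Uniform convergence.} Using the same contraction argument that yielded local existence for \eqref{Eq:General singular}, the solution $v_d$ exists on any interval $[0,R]$ once $d$ is large. The energy
\[
E_d(s):=\frac{1}{2}(v_d'(s))^2+\frac{\lambda}{p+1}|v_d(s)|^{p+1}-\frac{\lambda\, d^{-(p-1)}}{2}v_d(s)^2
\]
is nonincreasing since $E_d'(s)=-\frac{H(d^{-(p-1)/2}s)}{s}(v_d'(s))^2\le 0$; this furnishes a uniform $C^1$-bound for $\{v_d\}$ on $[0,R]$. By Arzel\`a--Ascoli and continuous dependence on parameters (together with uniqueness for the singular limit IVP), $v_d\to\bar v$ in $C^1([0,R])$ as $d\to\infty$.

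\textbf{Oscillation of $\bar v$.} The decisive step is that $\bar v$ has infinitely many zeroes in $(0,\infty)$, a classical Emden--Fowler oscillation result in the subcritical range. The standard way to see this is via the Pohozaev-type functional
\[
P(s)=\frac{1}{2}s^{N}(\bar v'(s))^2+\frac{\lambda}{p+1}s^{N}|\bar v(s)|^{p+1}+\frac{N-2}{2}s^{N-1}\bar v(s)\bar v'(s),
\]
whose derivative along $\bar v$ has a definite sign precisely when \eqref{Eq: Subcriticality ODE} holds. Assuming for contradiction that $\bar v$ keeps a constant sign beyond some $s_0$, the monotonicity of $P$ combined with the energy identity and the asymptotics of $\bar v$ at $s=0$ and as $s\to\infty$ is incompatible with the energy $\bar E(s)=\frac{1}{2}(\bar v')^2+\frac{\lambda}{p+1}|\bar v|^{p+1}$ being nonincreasing. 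Hence $\bar v$ changes sign infinitely often, and each zero is simple by uniqueness.

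\textbf{Conclusion.} Fix $k\in\N$. Choose $R_k$ so that $\bar v$ has at least $k$ (simple) zeroes in $(0,R_k)$. By $C^1$-convergence on $[0,R_k]$, $v_d$ also has at least $k$ zeroes in $(0,R_k)$ for all $d$ sufficiently large. Undoing the rescaling, $w_d$ has at least $k$ zeroes in $(0,R_k\,d^{-(p-1)/2})$. Taking $d_k$ large enough that $R_k\,d_k^{-(p-1)/2}<\varepsilon$ completes the proof.

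\textbf{Main obstacle.} The heart of the argument is the oscillation statement for $\bar v$. Although classical, it requires a delicate interplay between the Pohozaev identity (whose sign is governed by the subcriticality hypothesis) and the monotonicity of the energy, together with the correct asymptotic behaviour of $\bar v$ at the singular point $s=0$ and at infinity. Managing the singular coefficient $H(0)/s$ uniformly in $d$ throughout the convergence argument is a secondary technical point, but it is handled by the local theory already invoked for $w_d$.
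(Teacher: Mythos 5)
Your plan follows essentially the same route as the paper: rescale $w_d$ to a normalized solution with unit initial value, show $C^1$-convergence on compact intervals to the solution of the singular autonomous limit problem, invoke oscillation of that limit solution under the subcriticality condition, and scale the zeroes back into $(0,\varepsilon)$. The only substantive differences are in two sub-steps: the paper simply cites Haraux--Weissler for the infinitude of zeroes of the limit solution, where you sketch a Pohozaev-functional proof (whose sign condition in ``dimension'' $N=H(0)+1$ is exactly \eqref{Eq: Subcriticality ODE}, though the decay estimates needed to force $P(s)\to 0$ would still have to be written out), and the paper establishes convergence up to the singular point $r=0$ by hand (uniform bounds on $z_d''$ and the computation $\phi'(r)/r\to -1/(1+H(0))$) where you delegate it to uniform continuous dependence in the contraction-mapping local theory; both are workable.
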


The inequality \eqref{Eq: Subcriticality ODE} is true, in particular, when $n\geq 3$, $p=p_n=\frac{n+2}{n-2}$ and $H(0)<n-1$.
Also for any $p>1$ in case $H(0) \leq 1$.

The strategy of the proof is to compare the solutions $w_d$, with $d$ big enough, with the solution of a limit problem not depending on $d$, as it is done in \cite{McLeod}. Let
\[
z_{d } (r): = d ^{-\frac{2}{p-1}} \   w_{d ^{\frac{2}{p-1}}}\left( \frac{r}{d   \sqrt{\lambda} } \right) .
\]

Note that $z_{d } (0)=1$,
$z_{d } ' (0)=0$ and that this function satisfies the equation

\begin{equation}\label{Eq:General singular dilation}
z_{d }''(r) + \frac{H(\frac{r}{\sqrt{\lambda} d  })}{r} z_{d }'(r) +  |z_{d }(r)|^{p-1} z_{d } -  d ^{-2}z_{d } =0\quad{\text{in }}\ [0,dA\sqrt{\lambda}]
\end{equation}

Consider the following limit Cauchy problem

\begin{equation}\label{Eq:Limit problem}
\left\{\begin{tabular}{cc}$v''(r) + \frac{H(0)}{r} v'(r) +  |v(r)|^{p-1} v  =0$ & \text{ in } $[0,\infty)$\\
$v(0)=1$,\  $v'( 0)=0$ & \end{tabular}\right.
\end{equation}

Let $v_0$ be the unique solution to this problem. We next show that the dilated solutions $z_d$ look like $v_0$ when $d$ is big enough.

\begin{lemma}\label{Lemma:Convergence to limit problem}
For any $K>0$  the functions $z_{d}$ converge to $v_0$ $C^1$-uniformly on $[0,K]$ as $d\rightarrow\infty$.
\end{lemma}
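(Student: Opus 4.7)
The plan is a compactness-uniqueness argument: establish uniform $C^1$ bounds on $\{z_d\}$ on $[0,K]$, extract a subsequential limit $z^\ast$, show it solves the limit Cauchy problem \eqref{Eq:Limit problem}, and invoke uniqueness of $v_0$ to identify the limit.

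For the uniform bounds I would introduce the scaled energy
\[
\mathcal{E}_d(r) := \tfrac{1}{2}(z_d'(r))^2 + \tfrac{1}{p+1}|z_d(r)|^{p+1} - \tfrac{1}{2d^2}z_d(r)^2.
\]
A direct computation using \eqref{Eq:General singular dilation} gives $\mathcal{E}_d'(r) = -\tfrac{H(r/(\sqrt{\lambda}d))}{r}(z_d'(r))^2 \leq 0$ on $[0,K]$ as soon as $d$ is large enough that $r/(\sqrt{\lambda}d)\in[0,A]$, since $H>0$ there. Combined with $\mathcal{E}_d(0) = \tfrac{1}{p+1} - \tfrac{1}{2d^2}$ and the elementary inequality $|z_d|^{p+1} \geq z_d^2$ when $|z_d|\geq 1$, this yields $|z_d(r)|\leq C$ and hence also $|z_d'(r)|\leq C$ on $[0,K]$, uniformly in $d$.

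I would then upgrade this to equicontinuity of $z_d'$ up to the singular endpoint $r=0$. Rewriting the equation as $(\mu_d z_d')' = -\mu_d(|z_d|^{p-1}z_d - d^{-2} z_d)$ with integrating factor $\mu_d$ satisfying $(\log \mu_d)' = H(r/(\sqrt{\lambda}d))/r$, the $C^1$ regularity of $H$ yields $\mu_d(r) = r^{H(0)}\,\tilde{\mu}_d(r)$ with $\tilde{\mu}_d$ bounded above and away from $0$ uniformly in $d$ on $[0,K]$. Integrating from $0$ and using $z_d'(0)=0$ gives $|z_d'(r)|\leq Cr$ uniformly, and plugging back into the ODE provides a uniform bound on $|z_d''|$ on $[0,K]$.

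With these bounds in hand, Arzel\`a--Ascoli yields, for any sequence $d_n\to\infty$, a subsequence along which $z_{d_n}\to z^\ast$ in $C^1([0,K])$. Continuity of $H$ on $[0,A]$ gives $H(r/(\sqrt{\lambda}d_n))\to H(0)$ uniformly on $[0,K]$, and $d_n^{-2}\to 0$; passing to the limit in \eqref{Eq:General singular dilation}, written in integrated form on compact subsets of $(0,K]$, shows that $z^\ast$ solves \eqref{Eq:Limit problem} with $z^\ast(0)=1$ and $(z^\ast)'(0)=0$. Uniqueness for this Cauchy problem then forces $z^\ast = v_0$, and since every sequence has a sub-subsequence converging to $v_0$ the full family converges. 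The delicate point is precisely the uniform control of $z_d'/r$ at the singularity $r=0$: standard continuous dependence on parameters for regular ODEs is inapplicable because the coefficient $H(r/(\sqrt{\lambda}d))/r$ is singular there, and it is the integrating-factor computation that closes the gap.
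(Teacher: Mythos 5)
Your proof is correct, and although it shares the overall architecture of ours (uniform energy bounds, Arzel\`a--Ascoli, identification of the limit by uniqueness of the Cauchy problem), the technical core --- the treatment of the singularity at $r=0$ --- is genuinely different. We bound $z_d''$ near $0$ by a sign analysis: on a fixed interval $[0,\varepsilon]$ one has $z_d'\le 0$ and $z_d\in[0,1]$, hence $z_d''\ge -1$, and an upper bound follows from the monotonicity of $z_d'(r)/r$ while $z_d''\ge 0$; we then need a second, separate argument (a $\liminf$/$\limsup$ study of $\phi'(r)/r$ via the mean value theorem, our Step 4) to show the subsequential limit is $C^2$ at $0$ and therefore equals $v_0$. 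Your integrating-factor computation replaces both steps at once: writing $(\mu_d z_d')'=-\mu_d\left(|z_d|^{p-1}z_d-d^{-2}z_d\right)$ with $\mu_d(r)=r^{H(0)}\widetilde{\mu}_d(r)$ and $\widetilde{\mu}_d$ uniformly pinched between positive constants (this is where the $C^1$ regularity of $H$ enters, through $|H(s/(\sqrt{\lambda}d))-H(0)|\le Cs/d$), you obtain $|z_d'(r)|\le Cr$ uniformly in $d$, hence the uniform $C^2$ bound, and the same identity passes to the limit to give $r^{H(0)}(z^\ast)'(r)=-\int_0^r s^{H(0)}|z^\ast(s)|^{p-1}z^\ast(s)\,ds$, which is exactly the integral equation characterizing $v_0$. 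The one point you should make explicit is the last one: the uniqueness statement for \eqref{Eq:Limit problem} must be applied in a class that contains $z^\ast$, which a priori is only $C^1$ at $0$; your own bound $|(z^\ast)'(r)|\le Cr$ and the limiting integral identity place $z^\ast$ in the class where the contraction-mapping uniqueness of \cite{Haraux,k} applies, so the argument closes. What your route buys is a single uniform estimate doing the work of our Steps 2 and 4; what ours buys is that it avoids any quantitative use of the regularity of $H$ beyond continuity at the relevant scale.
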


\begin{proof} We begin by picking $D_0 >2$ such that $A \sqrt{\lambda}  D_0 >K$ and we will
assume that $d  \geq D_0$ from
now on. We then consider  $z_{d }$  defined on $[0,K]$. We divide the proof into four steps:

\textbf{Step 1.} \emph{The functions $z_d$ and $z'_d$ are uniformly bounded in $[0,K]$ for every $d\geq D_0$}

Let $G(t) = \vert t\vert^{p+1} /(p+1) -t^2 /(2d ^2 )$ and define on this interval the energy function
\[
E(r,d) = \frac{(z_{d }'(r))^2 }{2}+ G(  z_{d }(r)  ) .
\]

Then  we have
\[
E'(r,d)  = - (z_{d } ' )^2  \  \frac{H(\frac{r}{\sqrt{\lambda} d  })}{r} \leq 0
\]
for every $d\geq D_0$ and every $r\in[0,K]$. This implies that $E(r,d) $ is  bounded from above by $E(0,d)=G(1) =1/(p+1) -d ^{-2} /2$, which is bounded. It follows that $E(r,d)$ is uniformly bounded from above in both variables and, therefore,
$z_{d }$ and $z_{d }'$ are uniformly bounded in $[0,K]$ for every $d\geq D_0$.

\textbf{Step 2.} \emph{$z_{d }''$ is uniformly bounded in $[0,K]$.}

For any $\varepsilon >0$ it is clear from equation \eqref{Eq:General singular dilation} and
Step 1 that $z_{d }''$ is uniformly bounded on
$[\varepsilon , K]$. Therefore we only need to find $\varepsilon >0$ independent of $d \geq D_0$
such that $z_{d }''$ is uniformly bounded in $[0,\varepsilon ]$.

Note that $z_d''(0) <0$
for $d\geq D_0$. Therefore, close to $r=0$, both $z_{d }'$ and $z_{d }''$ are negative. Note that while $z_{d }' \leq 0$
and $z_d \in [0,1]$,
$z_{d }''$ is bounded from below by -1. Hence, if $\delta \in (0,1)$ and $r_0 >0$ is such that $z_d (r_0 ) = 1-\delta$
and $z_d' \leq 0$ in $[0,r_0 ]$ then $r_0 \geq \delta$. Note that $z_d$ has a local maximum at 0. If $r_1 >0$ is a
first local minimum and $z_d (r_1) >0$, we must have that $z_{d }^p (r_1)  -  d^{-2} z_{d } (r_1) <0$. Since
$d\geq 2$ we can find $\varepsilon >0$ independent of $d$ such that, for all $r\in [0,\varepsilon ]$, we have
that $z_d' (r) \leq 0$ and $z_d (r) \in [0,1]$. Then $z_d '' \geq -1 $ in $[0,\varepsilon]$. We can also assume that
if $r\in [0,\varepsilon ]$ then $H(\frac{r}{\sqrt{\lambda} d }) \in [H(0)/2 , 2H(0)]$. If at some $r_2 \in (0,\varepsilon )$
we have that $z_d '' (r_2 )=0$ then,

$$\frac{z_d ' (r_2 )}{r_2 } \geq \frac{-1}{H(\frac{r_2}{\sqrt{\lambda}d} )}.$$

Moreover while $z_d '' \geq 0$, the function $\frac{z_d ' (r)}{r}$ is increasing. This clearly implies that $z_d ''$ is also
bounded from above in $[0,\varepsilon ]$. This finishes the proof of Step 2.

\textbf{Step 3.} \emph{There is a sequence $d_k\rightarrow\infty$ and a function $\phi\in C^1([0,K]) \cap C^2((0,K])$ such that $z_{d_k}$ converges to $\phi$ $C^1$-uniformly on $[0,K]$, $C^2$-uniformly on $[\epsilon,K]$ for every $\epsilon>0$ and $\phi$ satisfies equation \eqref{Eq:Limit problem} in $(0,K]$}.

Taking the derivative of \eqref{Eq:General singular dilation} with respect to $r$ we get
\begin{equation}\label{Eq:formula z'''}
z'''(r) + \frac{H' (\frac{r}{\sqrt{\lambda} d  })}{ \sqrt{\lambda} d  r} z'(r) - \frac{H(\frac{r}{\sqrt{\lambda} d  })}{r^2} z'(r)  + \frac{H(\frac{r}{\sqrt{\lambda} d  })}{r} z''(r)+  p |z(r)|^{p-1} z' -  d ^{-2}z' =0
\end{equation}

The formula for $z'''$ together with Step 2 show that $z_{d }'''$ is uniformly bounded in any interval of the form
$[\epsilon , K]$, for any $\epsilon >0$. Then by Arzela-Ascoli Theorem, one can find a sequence $d _k \rightarrow \infty$ and functions $\phi$, $\alpha$ and $\beta$ such that $z_{d_k } \rightarrow \phi$, $z_{d_k }' \rightarrow \alpha$ uniformly on $[0,K]$, and $z_{d_k }'' \rightarrow \beta$ uniformly on $[\epsilon ,K]$.
By the fundamental theorem of calculus, $\alpha =\phi'$, $\beta = \phi''$, $\phi$ is a $C^1$-function on $[0,K]$ and it is
$C^2$ on $(0,K]$. It follows that $\phi$ satisfies \eqref{Eq:Limit problem} on $(0,K]$.

\textbf{Step 4.} \emph{$\phi$ is of class $C^2$ and satisfies \eqref{Eq:Limit problem} with initial conditions $\phi(0)=1$ and $\phi'(0)=0$, i.e., $\phi=v_0$ is the unique solution to the Cauchy problem \eqref{Eq:Limit problem}.}

It is enough to show that
\[
\lim_{r\rightarrow 0} \frac{\phi '(r)}{r} = -\frac{1}{1+H(0)}.
\]
for this would imply that $\phi$ is a $C^2$-function on $[0,K]$ satisfying equation \eqref{Eq:Limit problem} with the initial
conditions $\phi (0) = 1$, $\phi '(0)=0$.

We will prove that $\liminf_{r\rightarrow 0} \frac{\phi '(r)}{r} \geq -\frac{1}{1+H(0)}$. The proof that
$\limsup_{r\rightarrow 0} \frac{\phi '(r)}{r} \leq -\frac{1}{1+H(0)}$ is similar.

Then assume that $\liminf_{r\rightarrow 0} \frac{\phi '(r)}{r} <  -\frac{1}{1+H(0)}.$
Let $\delta >0$ so that, for any $r_0 >0$ there exists $r_1 \in (0,r_0 )$ with
\[
 \frac{\phi '(r_1)}{r_1} < -\frac{1}{1+H(0)}  - \delta.
\]

By taking $r_0$ small enough this would yield from equation \eqref{Eq:Limit problem} that, for any
$r\in (0,r_0 )$  satisfying the previous inequality,
\[
\phi ''(r) > -\frac{1}{1+H(0)}   >\frac{\phi '(r)}{r}.
\]

But since
\[
 \left( \frac{\phi '(r)}{r} \right) ' = \frac{1}{r^2} (\phi '' (r) r -\phi ' (r)),
\]
this in turn would imply that  $(\frac{\phi '(r)}{r})'>0$. It follows that $\frac{\phi '(r)}{r}$ is increasing in $(0,r_1 ]$ and, therefore it must have a limit
$\lim_{r\rightarrow 0} \frac{\phi '(r)}{r} =: c\leq-\frac{1}{1+H(0)}  - \delta.$ Then
$\phi ''(r)$ must also have a limit, $\widetilde{c}\geq-\frac{1}{1+H(0)} $.  But this contradicts the mean value theorem:
for any $\epsilon >0$ there must exist $r_{\epsilon} \in (\epsilon ,r)$ such that
\[
\frac{\phi '(r) -\phi '(\epsilon )}{r-\epsilon} = \phi ''(r_{\epsilon} ).
\]
But $ \phi ''(r_{\epsilon} )  > -\frac{1}{1+H(0)}  $ while
\[
\lim_{\epsilon \rightarrow 0} \frac{\phi '(r) -\phi '(\epsilon )}{r-\epsilon} =  \frac{\phi '(r)}{r} < -\frac{1}{1+H(0)}  - \delta.
\]

This proves Step 4 and, together with the previous three steps, finishes the proof of the lemma.
\end{proof}

The following Theorem is shown in \cite{Haraux}.

\begin{theorem}\label{Lemma:Infinite zeroes limit equation}
If $H(0) \geq 0$, $p>1$ and
\begin{equation}\label{Eq: Subcriticality ODE I}
\frac{H(0)+1}{2}< \frac{p+1}{p-1}
\end{equation}
then $v_0$ has infinite zeroes in $(0, \infty )$.
\end{theorem}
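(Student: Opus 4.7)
My plan is to convert the singular ODE \eqref{Eq:Limit problem} into an autonomous equation by the classical Emden--Fowler substitution and then analyze it via a Lyapunov / phase-plane argument. Introduce $\alpha=\tfrac{2}{p-1}$, $t=\log r$, and $y(t)=r^{\alpha}v(r)$. A direct calculation verifies that $v$ solves \eqref{Eq:Limit problem} on $(0,\infty)$ if and only if $y$ satisfies the autonomous second-order equation
\begin{equation*}
\ddot y - a\dot y + by + |y|^{p-1}y = 0\qquad\text{on }\R,
\end{equation*}
with $a=\tfrac{p+3-H(0)(p-1)}{p-1}$ and $b=\tfrac{2(p+1-H(0)(p-1))}{(p-1)^2}$. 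Hypothesis \eqref{Eq: Subcriticality ODE I} is exactly $a>0$, and the Cauchy data $v(0)=1$, $v'(0)=0$ translate into $(y(t),\dot y(t))\to(0,0)$ as $t\to-\infty$; the orbit in the phase plane emanates from the origin at $t=-\infty$.

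Next I introduce the Lyapunov-type functional $\tilde E(t)=\tfrac12\dot y^2+V(y)$ with $V(y)=\tfrac{b}{2}y^2+\tfrac{|y|^{p+1}}{p+1}$; a direct computation gives $\dot{\tilde E}=a\dot y^2\ge 0$, so $\tilde E$ is non-decreasing, and its limit at $-\infty$ forces $\tilde E(t)\ge 0$ for all $t$. Arguing by contradiction, suppose $v_0$ has only finitely many zeros in $(0,\infty)$. Replacing $v_0$ by $-v_0$ if necessary, we may assume $y(t)>0$ for all $t\ge T_0$. The task reduces to showing that the forward orbit cannot remain in the open half-plane $\{y>0\}$.

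If the orbit stays bounded there, LaSalle's invariance principle applied to the non-increasing functional $-\tilde E$ forces the $\omega$-limit set to be contained in the equilibria of the autonomous equation. In $\{y\ge 0\}$ these are the origin and, when $b<0$, the point $(y_\star,0)$ with $y_\star=(-b)^{1/(p-1)}$. Convergence to $(y_\star,0)$ is ruled out because $\tilde E(y_\star,0)=V(y_\star)<0$ is incompatible with $\tilde E\ge 0$; convergence to the origin would force $\tilde E\equiv 0$ (since $\tilde E$ is non-decreasing and starts at $0$), hence $\dot y\equiv 0$ and $y\equiv 0$, contradicting $v_0(0)=1$. If instead the orbit is unbounded inside $\{y>0\}$, the superlinear restoring term $-|y|^{p-1}y$ eventually dominates the dynamics: once $y$ becomes large, $\ddot y$ is large negative, driving $\dot y$ strongly negative and pulling $y$ back across the axis $\{y=0\}$.

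I expect the main obstacle to be making this last step rigorous, because the anti-damping term $a\dot y$ with $a>0$ keeps pumping energy into the system. A quantitative a priori bound $|y(t)|\le Ce^{\alpha t}$, deduced from the boundedness of the original energy $E(r)=\tfrac12(v')^2+\tfrac{|v|^{p+1}}{p+1}$ (which is non-increasing in $r$), together with a time-of-excursion estimate along the level set $\{y=M\}$ in the phase plane, should close the argument. The subcriticality condition $a>0$ enters precisely here: it provides the quantitative balance ensuring that the superlinear term wins over the anti-damping on a definite time scale, forcing the orbit to cross $\{y=0\}$ in finite time; iterating yields infinitely many such crossings, and hence infinitely many zeros of $v_0$ in $(0,\infty)$.
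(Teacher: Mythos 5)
Your Emden--Fowler reduction is correct and is a genuinely different route from the paper's, which simply invokes Proposition 3.9 of Haraux--Weissler (a Rellich--Pohozaev-type identity) and remarks that the one-zero argument can be iterated past each zero. Your coefficients check out: with $\alpha=2/(p-1)$ one gets $\ddot y+(H(0)-2\alpha-1)\dot y+\alpha(\alpha+1-H(0))y+|y|^{p-1}y=0$, the condition $a>0$ is indeed equivalent to \eqref{Eq: Subcriticality ODE I}, $(y,\dot y)\to(0,0)$ as $t\to-\infty$, and $\tfrac{d}{dt}\tilde E=a\dot y^{2}\ge 0$. The exclusion of the two equilibria in $\{y\ge 0\}$ (via $\tilde E\ge 0$ versus $V(y_\star)<0$, and via $\tilde E\equiv 0$ forcing $y\equiv 0$) is airtight, so the bounded-orbit case is complete.

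The gap is the unbounded case, which you flag but do not close, and the patch you propose does not work. The bound $|y(t)|\le e^{\alpha t}$ (which follows from $|v|\le 1$) is true but vacuous here: it is fully consistent with $y\to\infty$, and no ``time-of-excursion estimate'' is actually formulated. The heuristic ``once $y$ is large, $\ddot y$ is large negative and pulls $y$ across $\{y=0\}$'' is also not correct as stated: a large excursion need not be followed by a zero, since it could a priori be followed by a positive local minimum, and the anti-damping $a\dot y$ is exactly what prevents a pointwise comparison from working. What actually closes this case is the monotone energy evaluated at critical points of $y$: (i) any local minimum of $y$ in $\{y>0\}$ satisfies $\ddot y\ge 0$, $\dot y=0$, hence occurs at height at most $y_\star$, so $\tilde E$ there is at most $\max_{[0,y_\star]}V$; monotonicity of $\tilde E$ then caps $V$ at every preceding local maximum and bounds $y$ on any interval that is followed by a positive local minimum; (ii) $y$ cannot increase monotonically to $+\infty$, because $d\dot y/dy=a-(by+y^{p})/\dot y\le a$ while $\dot y>0$ and $y$ is large, so $\dot y$ grows at most linearly in $y$ while $y^{p}$ grows superlinearly, forcing $\dot y$ to vanish at a finite height; (iii) $\dot y$ cannot blow up while $y$ stays in a bounded positive interval, since $|\dot y|$ large forces $y$ to leave that interval (through $0$ or through its upper end) in arbitrarily short time. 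Steps (i)--(iii) reduce everything to your bounded case, and they are precisely where the superlinearity and the sign $a>0$ do their work; none of them is in your write-up, so the proof is incomplete at its decisive step.
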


We do a brief remark about the proof of this Theorem. The proof is, essentially, the same as the proof of Proposition 3.9 in \cite{Haraux}. However, in this proof it is only explicitly said that $v_0$ has one zero when $v_0'(0)=0$. It is easy to see from the uniqueness of the solution and from equation \eqref{Eq:Limit problem}, that if $r$ is a zero of $v_0$, then necessarily $v'_0(r)\neq 0$ and there must exist $s>r$ such that $v'_0(s)=0$ and $v_0(s)\neq 0$. Then, the argument in \cite{Haraux} can be repeated starting at $s$ to prove the existence of another zero and so on. This is well known, it has been pointed out explicitly, for instance, in
\cite{McLeod}.

\smallskip

\begin{proof}[Proof of Theorem \ref{Prop:prescribed zeroes}]
As inequality \eqref{Eq: Subcriticality ODE I} holds true by hypothesis, Theorem \ref{Lemma:Infinite zeroes limit equation} guaranties that the solution $v_0$ to the limit problem \eqref{Eq:Limit problem} has infinite zeroes in $(0,\infty)$. Now, the number of zeroes of $w_{d^{\frac{2}{p-1}} }$ in $[0,\varepsilon )$ is the same as the number of zeroes
of $z_{d }$ in $[0,\varepsilon d  \sqrt{\lambda} )$. We pick $K$ such that $v_0$ has at least $k$ zeroes in
$[0,K]$.  Lemma \ref{Lemma:Convergence to limit problem} gives us a value of $d_k $ such that $z_{d }$ has at least $k$ zeroes in
$[0,K]$ for any $d\geq d_k$ and we can also pick $d_k $ large enough so that $K<\varepsilon d  \sqrt{\lambda}$.
\end{proof}


\section{Prescribed number of zeroes: proof of the main Theorem}\label{Sec:Proof main}

In this section we consider equation \eqref{Eq:Singular BVP sphere radial} and prove Theorem \ref{Th:Nodal Yamabe ODE}. We will restrict ourselves to the hypothesis $m_1,m_2<n-1$, so that if $w_d$ is a solution to problem \eqref{Eq:Singular BVP sphere radial}, then $u=w(\arccos f)$ could not be a radial solution of the Yamabe equation \eqref{Eq:Yamabe sphere}. The general idea of the proof was sketched in Section \ref{Sec:The reduce equation}.

Let $a_0$ be the unique zero of $h$ in $(0,\pi )$. Let $w_d$ be the solution of \eqref{Eq:Singular BVP sphere radial}
with initial conditions $w_d (0)=d$, $w_d' (0)=0$. Note that $w_{-1}, w_0$ and $w_1$ are constant functions and
that $w_{-d}=-w_d$. Note also that if $d\neq -1 , 0 , 1$ and $r$ is a critical point of $w_d$ then $w_d (r) \neq -1 , 0 ,1$;
moreover $r$ is a local minimum iff $w_d (r) \in (-\infty , -1 ) \cup (0,1)$ and a local maximum iff
$w_d (r) \in (-1,0) \cup (1,\infty )$.

Define the energy function
\[
E(r,d):=\frac{(w_d'(r))^2}{2}+G(w_d(r)),
\]
where $G(t):=\frac{n(n-2)}{4 \ell^2}\left(\frac{\vert t\vert^{p_n+1}}{p_n+1}-\frac{t^2}{2}\right)$. This function is nonincreasing on the first variable in the interval $[0,a_0]$ and nondecreasing in $[a_0 , \pi ]$, since
\[
E'(r,d)=-\frac{h(r)}{\sin r}(w'_d(r))^2  .
\]
As a consequence of this fact we can easily obtain the following result:
\begin{lemma}\label{Lemma:no zeroes}
If $0< d\leq 1$, then $w_d (r)>0 $  for all  $r\in [0,a_0]$
\end{lemma}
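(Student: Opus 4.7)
The plan is to exploit the monotonicity of the energy $E(r,d)=\frac{1}{2}(w_d'(r))^2+G(w_d(r))$ on $[0,a_0]$ to rule out any zero of $w_d$ on that interval. The paper has just established that $E'(r,d)=-\frac{h(r)}{\sin r}(w_d'(r))^2\leq 0$ on $[0,a_0]$, since $h\geq 0$ there; in particular, $E(r,d)\leq E(0,d)=G(d)$ for every $r\in[0,a_0]$.

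The case $d=1$ is immediate because $w_1\equiv 1$ is a constant solution, so I would focus on $0<d<1$. The key observation is the shape of $G$ near the origin: since
\[
G'(t)=\frac{n(n-2)}{4\ell^{2}}\bigl(|t|^{p_n-1}t-t\bigr),
\]
the factor $|t|^{p_n-1}-1$ is strictly negative for $t\in(0,1)$, so $G$ is strictly decreasing on $[0,1]$. Therefore
\[
G(d)<G(0)=0.
\]

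The remainder is a one-line contradiction. Suppose $w_d(r_0)=0$ for some $r_0\in(0,a_0]$. Then $G(w_d(r_0))=G(0)=0$, so
\[
E(r_0,d)=\tfrac{1}{2}(w_d'(r_0))^{2}+G(0)\geq 0>G(d)=E(0,d),
\]
contradicting the monotonicity $E(r_0,d)\leq E(0,d)$. Combined with $w_d(0)=d>0$ and continuity of $w_d$, this forces $w_d(r)>0$ throughout $[0,a_0]$.

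I do not foresee a real obstacle here: the lemma is a direct corollary of the energy identity together with the fact that on $[0,1]$ the potential $G$ attains its maximum uniquely at $0$. The only mildly delicate point is to verify that $E$ is well-defined and continuous at the singular endpoint $r=0$ in spite of the coefficient $h(r)/\sin r$, but this is ensured by the initial condition $w_d'(0)=0$ and the fact that $w_d''(0)$ is finite (computable from the Taylor expansion of the ODE at $0$).
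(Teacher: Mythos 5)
Your proof is correct and is essentially the paper's own argument: both use the monotonicity of $E(\cdot,d)$ on $[0,a_0]$ together with $E(0,d)=G(d)<0$ for $d\in(0,1]$ and $E(r_0,d)\geq 0$ at any zero $r_0$ to reach a contradiction. The only cosmetic difference is that you treat $d=1$ separately and justify $G(d)<0$ via monotonicity of $G$, whereas the paper computes $E(0,d)$ explicitly and covers $d=1$ in the same stroke.
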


\begin{proof} Let $d\in(0,1]$ and suppose, to get a contradiction, that $w_d$ has a zero $r_0$ in $(0,a_0]$. Notice that $w'_d(r_0)\neq 0$, otherwise $w_d\equiv 0$ by uniqueness of the solution. Observe that $E(0,d)=\frac{n(n-2)}{4 \ell^2} (\frac{d^{p_n+1}}{p_n+1} -\frac{d^2}{2} ) <0$ in this situation. As the energy $E(r,d)$ is non increasing on the first variable, we have that $0>E(0,d)\geq E(r_0,d)=\frac{(w_d '(r_0))^2}{2}>0$, a contradiction.
\end{proof}

Note, in general, that if $d\neq 0$ and $w_d (r_1 ) =0$ then  $E(r_1 ,d)>0$. Also if $w_d ' (r_2) =0$ and
$w_d (r_2 ) \in (-1,0) \cup (0,1)$, then $E(r_2 ,d) <0$.  So, for instance, it cannot happen that
$r_2 < r_1 \leq a_0$ or $a_0 \leq r_1 <r_2$.

\vspace{.4cm}

Now consider the curve $I: \re \rightarrow \re^2$ given by $I(d) = (w_d (a_0 ) , w_d ' (a_0 ) )$. Note that $I(1)=(1,0)$,
$I(0) = (0,0)$, $I(-d) =-I(d)$ and $I(d) \neq (0,0)$ if $d\neq 0$. It is then easy to see that we have
a well defined continuous function $\theta : (0, \infty ) \rightarrow \re$ such that $\theta (1)=0$
and $\theta (d)$ gives an angle between $I(d)$ and the positive $x$-axis for any $d>0$. Note that, in a similar way, there is a unique continuous function  $\theta : (-\infty ,0) \rightarrow \re$ such that $\theta (-1) = -\pi$
and $\theta (d)$ gives an angle between $I(d)$ and the positive $x$-axis. Thus, we have that for any
$d>0$, $\theta (-d) = \theta (d) - \pi $. Also notice that $w_d (a_0 )=0$ if and only if $\theta (d) = -\frac{\pi}{2} - k \pi$ for some  integer $k$.

For $d\neq 0$ define $n(d)$ as the number of zeroes of $w_d$ in $[0, a_0 )$. Note that $n(d)=n(-d)$. We will see
that $\theta (d)$ determines $n(d)$. To prove this we start with the following observation:

\begin{lemma}\label{Lemma:Angle transversality}
Suppose $\theta(d_\ast)=-\frac{\pi}{2}-k\pi$ for some $d_\ast>0$ and some integer $k\geq 0$, and that $n(d_\ast ) =m \geq 0$. Then, given $0<\varepsilon<\pi$ there exists $\delta>0$ such that if $\vert d - d_\ast\vert<\delta$, then $\theta(d)\in (-\frac{\pi}{2}-k\pi-\varepsilon,-\frac{\pi}{2}-k\pi+\varepsilon)$ and
\begin{enumerate}
\item  $\theta(d)<-\frac{\pi}{2}-k\pi$ iff $n(d)=m+1$,
\item  $\theta(d)\geq-\frac{\pi}{2}-k\pi$ iff $n(d)= m$.
\end{enumerate}
\end{lemma}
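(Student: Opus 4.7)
The plan is to combine continuous dependence of solutions on initial data with a linearization of the zero that $w_{d_\ast}$ has at $r=a_0$. First, I would note that $\theta(d_\ast)=-\pi/2-k\pi$ together with $|I(d_\ast)|>0$ forces $w_{d_\ast}(a_0)=0$ and $w_{d_\ast}'(a_0)\neq 0$. By uniqueness for the ODE at regular points (a common zero of $w_{d_\ast}$ and $w_{d_\ast}'$ in $(0,a_0]$ would force $w_{d_\ast}\equiv 0$, contradicting $w_{d_\ast}(0)=d_\ast\neq 0$), every zero of $w_{d_\ast}$ in $(0,a_0]$ is simple, so I can list them as $0<r_1<\cdots<r_m<r_{m+1}=a_0$.

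Next, since $d\mapsto w_d$ is $C^1$-continuous on $[0,a_0]$, I would apply the implicit function theorem at each simple zero $r_j$ to obtain a unique continuous branch $r_j(d)$ of zeros of $w_d$ with $r_j(d_\ast)=r_j$, defined for $d$ close to $d_\ast$. A standard compactness argument then shows that these are the only zeros of $w_d$ in $[0,a_0]$ for $d$ sufficiently close to $d_\ast$: otherwise, a sequence $d_n\to d_\ast$ admitting additional zeros $s_n$ would subconverge to a zero of $w_{d_\ast}$ in $[0,a_0]$, which must equal some $r_j$, contradicting local uniqueness. Shrinking $\delta$ further ensures $r_1(d),\ldots,r_m(d)\in(0,a_0)$ and $|\theta(d)+\pi/2+k\pi|<\varepsilon$ whenever $|d-d_\ast|<\delta$. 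Consequently $n(d)\in\{m,m+1\}$, equaling $m+1$ exactly when $r_{m+1}(d)<a_0$ and $m$ otherwise.

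To decide between these cases, I would Taylor expand $w_d$ about $a_0$:
\[
0 = w_d(r_{m+1}(d)) = w_d(a_0) + w_d'(a_0)\bigl(r_{m+1}(d)-a_0\bigr) + o\bigl(r_{m+1}(d)-a_0\bigr),
\]
and use $w_d(a_0)=|I(d)|\cos\theta(d)$ and $w_d'(a_0)=|I(d)|\sin\theta(d)$ to obtain $r_{m+1}(d)-a_0=-\cot\theta(d)+o(r_{m+1}(d)-a_0)$. Writing $\theta(d)=-\pi/2-k\pi+\eta(d)$ and applying the elementary identity $\cot(-\pi/2-k\pi+\eta)=-\tan\eta$, this becomes $r_{m+1}(d)-a_0=\tan\eta(d)+o(r_{m+1}(d)-a_0)$, so for $d$ sufficiently close to $d_\ast$ the quantities $r_{m+1}(d)-a_0$ and $\eta(d)$ have the same sign. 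This yields both (1) and (2) at once: $\theta(d)<-\pi/2-k\pi$ (that is, $\eta(d)<0$) gives $r_{m+1}(d)<a_0$ and hence $n(d)=m+1$, while $\theta(d)\geq -\pi/2-k\pi$ gives $r_{m+1}(d)\geq a_0$ and hence $n(d)=m$.

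The main technical point I anticipate is ensuring that $r_{m+1}(d)$ is the \emph{only} zero that can cross the boundary $r=a_0$ as $d$ varies: this rests on the nondegeneracy $w_{d_\ast}'(a_0)\neq 0$ (forced by the hypothesis on $\theta(d_\ast)$), which is precisely what allows the implicit function theorem to locate every nearby zero uniformly and rules out spurious zeros appearing or disappearing in $[0,a_0]$.
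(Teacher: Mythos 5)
Your proof is correct and follows essentially the same strategy as the paper: use the nondegeneracy $w_{d_\ast}'(a_0)\neq 0$ (forced by $\theta(d_\ast)=-\pi/2-k\pi$) to show the first $m$ zeroes persist in $(0,a_0)$ while exactly one zero sits near $a_0$, and then read off from the sign of $w_d(a_0)=|I(d)|\cos\theta(d)$ whether that zero lies before or at/after $a_0$. The only cosmetic difference is that the paper deduces the position of the last zero from strict monotonicity of $w_d$ on a fixed interval around $a_0$ (treating $k$ even and odd separately), whereas you use the implicit function theorem plus a Taylor expansion and the identity $\cot(-\pi/2-k\pi+\eta)=-\tan\eta$ to handle both parities at once.
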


\begin{proof}First choose $\delta_1 >0$ such that if $\vert d - d_\ast\vert<\delta_1$, then $\theta(d)\in (-\frac{\pi}{2}-k\pi-\varepsilon,-\frac{\pi}{2}-k\pi+\varepsilon)$.

If $k$ is even, then $w_{d_\ast}' (a_0 ) <0$. For $\epsilon_1 >0$ small enough, $w_{d_\ast}' (t ) <0$ for
all $t\in (a_0 - \epsilon_1 , a_0 + \epsilon_1 )$. Then there exists a positive $\delta_2 < \delta_1 $ such that
if $\vert d - d_\ast\vert<\delta_2 $, then $w_{d}' (t ) <0$ for
all $t\in (a_0 - \epsilon_1 , a_0 + \epsilon_1 )$. We can also assume that $w_d$ has exactly one zero
in $(a_0 - \epsilon_1 , a_0 + \epsilon_1 )$. There exists also $\delta < \delta_2$ such that
if  $d \in (d_\ast -\delta , d_\ast + \delta )$, then $w_d$ has exactly $m$ zeroes in $[0, a_0 - \epsilon_1 ]$.
For $d \in (d_\ast -\delta , d_\ast + \delta )$ we have that
if  $\theta(d)\geq -\frac{\pi}{2}-k\pi$  then $w_d (a_0 ) \geq 0$. This implies that the zero of $w_d$ in
$(a_0 - \epsilon_1 , a_0 + \epsilon_1 )$
is
$ \geq a_0$. Therefore $n(d)=m$. If instead $\theta(d) <  -\frac{\pi}{2}-k\pi$ then $w_d (a_0 ) < 0$. This implies that the zero of $w_d$ in
$(a_0 - \epsilon_1 , a_0 + \epsilon_1 )$
is
$ < a_0$. Therefore $n(d)=m +1$.

The argument in the case $k$ is odd is similar.
\end{proof}

As usual for $x \in \re$, let $[x]$ be the maximum integer such that $[x]\leq x$. Then for $d>0$ define

$$\overline{n} (d) = -\left[\frac{\theta(d) - \pi /2}{\pi} \right] -1.$$

\begin{proposition}\label{Lemma:argument formula}
For $d>0$ we have that $\theta (d) < \pi/2$ and $n(d) = \overline{n} (d)$.
\end{proposition}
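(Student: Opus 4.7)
My plan is to encode the problem in a Prüfer-type angle and exploit a rigid transversality it satisfies at the zero set of $w_d$. Since $w_d(0) = d > 0$, uniqueness for the (singular) initial value problem ensures $(w_d(r), w_d'(r)) \neq (0,0)$ throughout $[0, a_0]$, so I can define a continuous lift
\[
\phi(r, d) := \arg\bigl( w_d(r) + i\, w_d'(r) \bigr), \qquad \phi(0, d) = 0.
\]
With this normalization $\theta(d) = \phi(a_0, d)$, and the zeros of $w_d$ in $[0, a_0)$ correspond exactly to the $r \in (0, a_0)$ with $\phi(r, d) \in \pi/2 + \pi\Z$.

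The key computation is the standard Prüfer identity
\[
\phi'(r, d) = \frac{w_d(r)\, w_d''(r) - w_d'(r)^2}{w_d(r)^2 + w_d'(r)^2},
\]
combined with the observation that at any $r_0 \in (0, a_0]$ with $w_d(r_0) = 0$, equation \eqref{Eq:Singular BVP sphere radial} forces $w_d''(r_0) = -\frac{h(r_0)}{\sin r_0} w_d'(r_0)$ (the nonlinear term vanishes), so $w_d(r_0)\, w_d''(r_0) = 0$ and
\[
\phi'(r_0, d) = -\frac{w_d'(r_0)^2}{w_d'(r_0)^2} = -1.
\]
Hence whenever $\phi(\cdot, d)$ meets a level $\pi/2 + k\pi$ it crosses transversally with slope $-1$, and in particular can never touch such a level from below.

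The first assertion follows immediately: were $\phi(r_0, d) = \pi/2$ for some minimal $r_0 \in (0, a_0]$, then $\phi(r, d) < \pi/2$ on $[0, r_0)$ would force $\phi'(r_0, d) \geq 0$, contradicting $\phi'(r_0, d) = -1$. The same argument applied to each level $-\pi/2 - j\pi$, $j \geq 0$, shows that $\{r \in [0, a_0] : \phi(r, d) > -\pi/2 - j\pi\}$ is an initial subinterval of $[0, a_0]$: once $\phi$ descends below this level it stays below. Therefore $\phi$ meets $-\pi/2 - j\pi$ in $[0, a_0]$ at most once, and meets it in $(0, a_0)$ if and only if $\theta(d) < -\pi/2 - j\pi$.

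The second assertion then reduces to a bookkeeping exercise. Via the bijection between zeros of $w_d$ in $[0, a_0)$ and such crossings, $n(d)$ equals the number of integers $j \geq 0$ with $\theta(d) < -\pi/2 - j\pi$, and a direct floor-function computation gives exactly $-\bigl[(\theta(d) - \pi/2)/\pi\bigr] - 1 = \overline{n}(d)$; the marginal case $\theta(d) = -\pi/2 - j\pi$ is automatically consistent because it corresponds to $w_d(a_0) = 0$, which is excluded from $n(d)$. Everything in the argument is routine once the Prüfer slope identity $\phi' = -1$ at $w_d = 0$ is in hand; that single computation is really the only place where the structure of the ODE enters, and it is what makes the whole picture rigid.
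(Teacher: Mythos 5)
Your proposal is correct, and it takes a genuinely different route from the paper. The paper proves the proposition by a connectedness argument in the shooting parameter: it defines $A=\{d>0:\theta(d)<\pi/2,\ n(d)=\overline n(d)\}$, shows $(0,1]\subset A$ using Lemma \ref{Lemma:no zeroes}, and then shows $A$ is both open and closed in $(0,\infty)$, the delicate points being the behaviour of $n$ and $\theta$ near parameters $d_\ast$ with $w_{d_\ast}(a_0)=0$ (this is exactly what Lemma \ref{Lemma:Angle transversality} is for) and the exclusion of $\theta(d_\ast)=\pi/2$ at the infimum of the complement. You instead fix $d$ and argue in the radial variable via a Pr\"ufer angle: the identity $\phi'=\bigl(w w''-(w')^2\bigr)/\bigl(w^2+(w')^2\bigr)$ gives $\phi'=-1$ at every zero of $w_d$, so $\phi$ crosses each level in $\pi/2+\pi\Z$ at most once and only downward, which yields both $\theta(d)<\pi/2$ and the exact count $n(d)=\overline n(d)$ by elementary bookkeeping; the marginal case $w_d(a_0)=0$ is handled automatically rather than by a separate transversality lemma. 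Your argument is more local and arguably cleaner (it is the classical Sturm--Pr\"ufer mechanism), and it makes no use of continuous dependence on $d$ except at one point you pass over quickly: identifying your lift $\phi(a_0,\cdot)$ with the paper's $\theta$ requires observing that both are continuous lifts of the argument of $I(d)$ agreeing at $d=1$ (where $w_1\equiv 1$), which in turn uses joint continuity of $(r,d)\mapsto(w_d(r),w_d'(r))$ and the nonvanishing of this map; you should also say explicitly that global existence of $w_d$ on $[0,a_0]$ and $(w_d,w_d')\neq(0,0)$ (by uniqueness for the IVP) are being invoked, though both are already established in the paper. The trade-off is that the paper's continuation argument reuses Lemma \ref{Lemma:Angle transversality}, which is needed again elsewhere, whereas your computation is self-contained but specific to this proposition.
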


\begin{proof} Let $A=\{ d>0 : \theta (d) < \pi /2, \ n(d) = \overline{n} (d) \}$. Note that Lemma \ref{Lemma:no zeroes} implies that $\theta (d) \in (-\pi /2 ,\pi /2 )$ for any $d \in (0,1]$. Then $(0,1] \subset A$.
If $d_\ast  \in A$ and
$\theta(d_\ast) \neq -\frac{\pi}{2}-k\pi$ for any integer $k\geq 0$ then $w_{d_\ast} (a_0 ) \neq 0$. Then there exist
$\delta >0$ such that for all $d\in (d_\ast -\delta , d_\ast + \delta )$, $w_d (a_0 )$ has the same sign as  $w_{d_\ast} (a_0 ) \neq 0$.
It follows that for all such $d$, $n(d) = n(d_\ast ) = \overline{n}(d_\ast ) = \overline{n}(d)$, and $\theta (d) < \pi /2$. So $(d_\ast -\delta , d_\ast + \delta )
\subset A \}$. If $d_\ast  \in A$ and
$\theta(d_\ast) = -\frac{\pi}{2}-k\pi$ for some integer $k\geq 0$ then  $n(d_\ast ) =k$ and by Lemma \ref{Lemma:Angle transversality} there exist
$\delta >0$ such that $(d_\ast -\delta , d_\ast + \delta )
\subset A$. Therefore $A$ is open. Assume that $A \neq (0,\infty )$. Let $d_\ast = \inf ( \  (0,\infty ) -A  \ )$.
If $\theta (d_\ast ) = \pi /2$ then $w_{d_\ast} (a_0 ) =0$ and $w_{d_\ast}' (a_0 ) >0$. It follows that
$w_{d_\ast}$ must have a zero in $(0,a_0 )$. This implies that $w_{d_\ast}$ cannot be approximated by
functions which are positive on $[0, a_0 ]$. But from the definition of $d_\ast$, for $d< d_\ast$, $\theta (d) < \pi /2$.
If such $d$ is close to $d_\ast$, then $\theta (d)$ is close to $\pi /2$ and so $w_d$ is positive in $[0, a_0 ]$, because $d \in A$.
This is a contradiction and therefore $\theta (d_\ast ) < \pi /2$.
If $\theta(d_\ast) \neq -\frac{\pi}{2}-k\pi$ for any integer $k\geq 0$ then both $n$ and $ \overline{n}$ are constant close
to $d_\ast$. Since $d_\ast$ is in the closure of $A$, we would have that $d_\ast \in A$, which is a contradiction.
If
$\theta(d_\ast) = -\frac{\pi}{2}-k\pi$ we can use again  Lemma \ref{Lemma:Angle transversality} and the fact that for any $d< d_\ast$, $d\in A$ to see
that necessarily $n(d_\ast ) =k$ and, therefore, $d_\ast \in A$, giving again a contradiction.
Therefore $A=(0,\infty )$ and the proposition is proved.

\end{proof}

 An immediate consequence of this Proposition  and Theorem \ref{Prop:prescribed zeroes} is the following
\begin{corollary}\label{Cor:Limit angle}
\[
\lim_{d\rightarrow\infty}\theta(d)=-\infty.
\]
\end{corollary}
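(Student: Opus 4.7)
The plan is to combine Theorem \ref{Prop:prescribed zeroes} (existence of many zeroes near the singularity for large initial data) with Proposition \ref{Lemma:argument formula} (the formula $n(d) = \overline{n}(d)$) to conclude that $\theta(d)$ must go to $-\infty$.

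First I would verify that Theorem \ref{Prop:prescribed zeroes} is applicable to our equation on a subinterval $[0,A]$ with $A<a_0$. Here we set $H(r) := h(r)\, r /\sin r$, which is positive and $C^1$ on $[0,a_0)$, with $H(0) = h(0) = m_1$, and we take $\lambda = \frac{n(n-2)}{4\ell^2}$, $p = p_n = \frac{n+2}{n-2}$. With these values $\frac{p+1}{p-1} = \frac{n}{2}$, so the subcriticality condition \eqref{Eq: Subcriticality ODE} reads
\[
\frac{m_1+1}{2} < \frac{n}{2},
\]
which holds precisely because $m_1 < n-1$ under our standing hypothesis. Thus Theorem \ref{Prop:prescribed zeroes} applies.

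Next, I would fix any $\varepsilon \in (0,a_0)$ and any positive integer $k$. The theorem guarantees $d_k > 0$ such that for every $d \geq d_k$ the solution $w_d$ has at least $k$ zeroes in $(0,\varepsilon) \subset [0,a_0)$. In terms of the counting function this says $n(d) \geq k$ for $d \geq d_k$, so
\[
\lim_{d\to\infty} n(d) = +\infty.
\]

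Finally, by Proposition \ref{Lemma:argument formula} we have $n(d) = \overline{n}(d) = -\left[\frac{\theta(d) - \pi/2}{\pi}\right] - 1$ for every $d>0$. Therefore
\[
\left[\frac{\theta(d)-\pi/2}{\pi}\right] = -n(d) - 1 \;\longrightarrow\; -\infty \quad\text{as } d\to\infty,
\]
which forces $\theta(d) \to -\infty$, proving the corollary. There is no real obstacle here: all the work has been done by Theorem \ref{Prop:prescribed zeroes} and Proposition \ref{Lemma:argument formula}; the only thing to check is that the hypotheses of the former translate correctly to our geometric setting, which is exactly where the assumption $m_1<n-1$ enters.
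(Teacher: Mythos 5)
Your argument is correct and is exactly the one the paper intends: the corollary is stated there as an immediate consequence of Theorem \ref{Prop:prescribed zeroes} and Proposition \ref{Lemma:argument formula}, with the subcriticality check $\frac{m_1+1}{2}<\frac{n}{2}$ (i.e.\ $m_1<n-1$) being the only hypothesis to verify, just as you did. No gaps.
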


\smallskip

Now we proceed to define a second curve in the phase space, corresponding to the solutions to problem \eqref{Eq:Singular BVP sphere radial} in $[0,\pi]$ with condition $w'(\pi)=0$. Let $\widetilde{h}(r)=-h(\pi-r)=\frac{m_1+m_2}{2}\cos r + \frac{m_2-m_1}{2}$ and consider the initial conditions problem \eqref{Eq:Singular backward equivalent}. As it was mention in Section \ref{Sec:The reduce equation}, if $\omega$ is a solution to this problem, then $\widetilde{w}(r):=\omega(\pi-r)$ solves the ``final'' conditions problem \eqref{Eq:Singular backward}.

For $c\in\R$, denote by $\widetilde{w}_c$ the solution to the problem \eqref{Eq:Singular backward} and define the map $J(c):=(\widetilde{w}_c(a_0),\widetilde{w}_c'(a_0))$.

In an entirely similar way, $J(1)=(1,0)$, $J(0)=(0,0)$, $J(c) \neq (0,0)$ if $c\neq 0$ and $J(-c)=-J(c)$. So, there is a well define argument function $\vartheta$ such that
\[
J(d)=(\vert J(c)\vert\cos(\vartheta(c)),\vert J(c)\vert\sin(\vartheta(c)).
\]

Note that if $\theta$ is the argument function corresponding to the solutions of equation
\eqref{Eq:Singular backward equivalent}, then $\vartheta =- \theta$.

It follows from the previous discussion that  $\vartheta(c)>-\pi/2$ for every $c\neq 0$ and that if $N(c)$ denotes the number of zeroes of $\widetilde{w}_c$ in $(a_0 , \pi)$, then
\[
N(c)= - \left[ \frac{-\vartheta(c) - \pi /2}{\pi}\right]  -1 .
\]
Hence, Theorem \ref{Prop:prescribed zeroes} also implies
\begin{equation}\label{Eq:limit argument}
\lim_{c\rightarrow\infty}\vartheta(c)=\infty
\end{equation}

Observe that an easy consequence of the formulas for $n(d)$ and $N(c)$ is that if $\theta(d),\vartheta(c)\in (-\pi/2,\pi/2)$, then the solutions $w_d$ and $\widetilde{w}_c$ have no zeroes before $a_0$ and after $a_0$, respectively.

\vspace{.4cm}

Next, define curves $R,S:[1,\infty)\rightarrow\R\times\R_{>0}$ in the radius-argument plane given by
\[
R(d):=(\theta(d),\vert I(d) \vert)\qquad\text{and}\qquad S(c):=(\vartheta(c),\vert J(c)\vert)
\]
In the following lemma we use strongly that the positive solutions to the Yamabe problem on the sphere are all axially symmetric, i.e. invariant by an action of
$O(n)$ fixing an axis.

\begin{lemma}
The curves $R$ and $S$ are simple and they intersect only at the point $(0,1)$.
\end{lemma}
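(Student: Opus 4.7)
The proof splits into simplicity and the intersection statement. The first is a short ODE uniqueness argument; the second reduces, via Aubin's classification of positive Yamabe solutions on $\mathbb{S}^n$, to the observation that any positive solution built from a radial profile $w$ with $w'(0)=w'(\pi)=0$ must be constant.

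For simplicity, suppose $R(d_1)=R(d_2)$ for some $d_1,d_2\in[1,\infty)$. Equality of polar coordinates gives $I(d_1)=I(d_2)$ in $\mathbb{R}^2$, so $w_{d_1}$ and $w_{d_2}$ share the same Cauchy data at the regular point $a_0$. Picard's theorem forces them to coincide on $[0,\pi]$, whence $d_1=w_{d_1}(0)=w_{d_2}(0)=d_2$. The argument for $S$ is identical.

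For the intersection, suppose $R(d)=S(c)$ for some $d,c\in[1,\infty)$. Then $I(d)=J(c)$, and the concatenation $w:=w_d$ on $[0,a_0]$ and $w:=\widetilde{w}_c$ on $[a_0,\pi]$ is a smooth solution of \eqref{Eq:Singular BVP sphere radial} with $w'(0)=w'(\pi)=0$. The step I expect to be the main obstacle is establishing that $w>0$ throughout $[0,\pi]$: a naive energy argument can fail once $d$ or $c$ is large enough that $G(d)>0$ or $G(c)>0$, so $w$ is in principle allowed to cross zero. The way out is to exploit the polar bounds already in hand. Proposition \ref{Lemma:argument formula} gives $\theta(d)<\pi/2$, and the analogous bound for $\vartheta$ (stated just after the formula for $N(c)$) gives $\vartheta(c)>-\pi/2$. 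Hence the common angle $\alpha:=\theta(d)=\vartheta(c)$ lies in $(-\pi/2,\pi/2)$, so by the remark preceding this lemma $w_d$ has no zeroes in $[0,a_0)$ and $\widetilde{w}_c$ has none in $(a_0,\pi]$; moreover $\alpha\in(-\pi/2,\pi/2)$ places $I(d)$ in the open right half-plane, giving $w_d(a_0)>0$. Thus $w>0$ on $[0,\pi]$.

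Finally, by Proposition \ref{Prop:Reduction} the function $u:=w\circ\arccos\circ f$ is a positive smooth solution of \eqref{Eq:Yamabe sphere} whose non-critical level sets are isoparametric hypersurfaces of $f$, and the hypothesis $m_1,m_2<n-1$ forces these to carry $\ell\geq 2$ distinct principal curvatures. But by Aubin's classification every positive solution of \eqref{Eq:Yamabe sphere} is axially symmetric, so its non-critical level sets are small round spheres, which are totally umbilical and therefore isoparametric with $\ell=1$. The only reconciliation is that $u$ is constant, and the unique positive constant solution of \eqref{Eq:Yamabe sphere} is $u\equiv 1$, so $w\equiv 1$ and $d=c=1$. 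Hence $R$ and $S$ meet only at $R(1)=S(1)=(0,1)$.
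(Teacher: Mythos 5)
Your proof is correct and follows essentially the same route as the paper: simplicity via ODE uniqueness, and the intersection statement via the angle bounds $\theta(d)<\pi/2$, $\vartheta(c)>-\pi/2$ (which force the glued solution to be positive) combined with Aubin's classification of positive solutions and the hypothesis $m_1,m_2<n-1$. Your treatment is in fact slightly more careful than the paper's, since you explicitly account for the constant solution $u\equiv 1$ (the case $d=c=1$, giving the intersection point $(0,1)$) and spell out why a non-constant $f$-invariant positive solution cannot be axially symmetric.
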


\begin{proof}
The fact that $R$ and $S$ are simple follows immediately from the uniqueness of the solutions to the problems \eqref{Eq:Singular forward} and \eqref{Eq:Singular backward}.  If $d,c>1$ are such that
$\theta (d) = \vartheta (c)$ then, by Proposition \ref{Lemma:argument formula}, $\theta(d) = \vartheta(c)\in(-\pi/2,\pi/2).$ Hence $w_d = \widetilde{w}_c$
is a solution to the problem \eqref{Eq:Singular BVP sphere radial} without zeroes, yielding a positive solution to the Yamabe problem which
is not axially symmetric. But positive solutions of the Yamabe problem on the sphere are all axially symmetric \cite{a1}, giving a contradiction to the hypothesis $m_1,m_2<n-1$. Therefore, $R\cap S=\{(0,1)\}$.
\end{proof}

As a consequence of this lemma, the curve $T:=R\cup S$ defines a simple curve in $\R^2$. As $\theta(d)\rightarrow-\infty$ and $\vartheta(c)\rightarrow\infty$, the Jordan curve Theorem on the sphere implies that the curve $T$ divides $\R\times\R_{>0}$ into two open, simply connected and disjoint components. Let $\mathcal{D}$ be the component such that the $x$-axis is contained in its closure and let $\mathcal{U}$ be the other one. For each $i,j\in\mathbb{N}\cup\{0\}$ define the exit times
\[
d_{i}:=\max\{d\geq 1\;:\;\theta(d)=-i\pi\}\qquad \text{and}\qquad c_{j}:=\max\{c\geq 1\;:\;\vartheta(c)=j\pi\}
\]
which are well defined by virtue of Corollary \ref{Cor:Limit angle} and limit \eqref{Eq:limit argument}.

For each $i,j$, define sequences $(x_i)$ and $(y_j)$ by
\[
x_i:=\vert I(d_i) \vert\qquad\text{and}\qquad y_j:=\vert J(c_j )\vert.
\]

\begin{lemma}\label{Lemma:sequences}
The sequences $(x_i)$ and $(y_j)$ are either monotone increasing or monotone decreasing and $ x_0,y_0\leq 1$
\end{lemma}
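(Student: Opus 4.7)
The lemma has two independent parts: the bounds $x_0,y_0\leq 1$ and the monotonicity of the two sequences. By the symmetry between the forward problem \eqref{Eq:Singular forward} and the recast backward problem \eqref{Eq:Singular backward equivalent} (via $\widetilde h(r)=-h(\pi-r)$), the arguments for $(x_i)$ and $(y_j)$ are structurally identical, so I would describe the plan only for $(x_i)$ and invoke symmetry for $(y_j)$.

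\emph{Bound $x_0\leq 1$.} By Proposition \ref{Lemma:argument formula}, $n(d_0)=0$, so $w_{d_0}$ is positive on $[0,a_0]$ with $w_{d_0}'(a_0)=0$. Since $h(a_0)=0$, evaluating \eqref{Eq:Singular BVP sphere radial} at $r=a_0$ gives
\[
w_{d_0}''(a_0)=-\frac{n(n-2)}{4\ell^{2}}\bigl(x_0^{\,p_n-1}-1\bigr)x_0,
\]
so it suffices to show $w_{d_0}''(a_0)\geq 0$. I would argue this by contradiction: if $x_0>1$, then $a_0$ is a strict local maximum of $w_{d_0}$ extended slightly past $a_0$. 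Tracking the nearby critical point $r_*(d)$ of $w_d$ via the implicit function theorem (using the non-degeneracy $w_{d_0}''(a_0)\neq 0$), one produces a smooth curve $d\mapsto r_*(d)$ with $r_*(d_0)=a_0$. By carefully monitoring how $r_*(d)$ and the next critical point of $w_d$ after $a_0$ move as $d$ crosses $d_0$, one locates some $d^{*}>d_0$ at which a critical point of $w_{d^{*}}$ coincides with $a_0$ and $w_{d^{*}}(a_0)>0$, yielding $\theta(d^{*})=0$ and contradicting the maximality of $d_0$.

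\emph{Monotonicity.} The map $R\colon[1,\infty)\to\re\times\re_{>0}$, $R(d)=(\theta(d),|I(d)|)$, is continuous and simple (by the previous lemma), with $R(1)=(0,1)$, $\theta(d)\to-\infty$ by Corollary \ref{Cor:Limit angle}, and, by the definition of $d_i$, $R(d)\in\{\theta<-i\pi\}$ for every $d>d_i$. Assume for contradiction that $(x_i)$ is not monotone; then some triple $(x_{j-1},x_j,x_{j+1})$ has $x_j$ as a strict local extremum. I would close the sub-arc $R([d_{j-1},d_{j+1}])$ by horizontal and vertical segments in $\re\times\re_{>0}$, chosen at heights outside $R([d_{j-1},d_{j+1}])$, so as to obtain a Jordan curve $\Gamma$ enclosing a bounded region $\Omega$. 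Since the continuation $R([d_{j+1},\infty))$ is confined to $\{\theta<-(j+1)\pi\}$ yet must escape to $\theta=-\infty$, it is forced either to cross $\Gamma$ through the closing segment, contradicting the maximality defining $d_{j+1}$ (which would require some later $d$ with $\theta=-(j+1)\pi$ above the existing arc), or to cross the original sub-arc of $R$, contradicting simplicity.

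\emph{Main difficulty.} The genuinely delicate step is the monotonicity: the curve $R$ may oscillate in height between consecutive $d_i$, so the closing segment must be placed with care, and one must rule out the scenario in which the curve sneaks around $\Omega$ along the boundary. The bound $x_0\leq 1$ is technically simpler but requires a careful perturbation tracking, particularly when $w_{d_0}''(a_0)=0$ is degenerate, where a higher-order argument using the sign of $\partial_d w_d'(a_0)$ near $d_0$ replaces the implicit function theorem.
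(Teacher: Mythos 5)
Your monotonicity argument has a genuine gap. You try to derive a contradiction from non-monotonicity using only the simplicity of $R$ and the maximality defining the exit times $d_i$, by closing a sub-arc of $R$ into a Jordan curve and trapping the continuation. But these properties alone do not force monotonicity: a simple curve starting at $(0,1)$ with $\theta\to-\infty$ can perfectly well have last-crossing heights such as $x_0=1$, $x_1=3$, $x_2=2$, $x_3=4,\dots$ (for instance with $\theta$ strictly decreasing along the curve, so that each $d_i$ is the unique crossing of $\theta=-i\pi$ and the continuation after $d_{j+1}$ lives entirely in $\{\theta<-(j+1)\pi\}$, disjoint from the sub-arc except at its endpoint). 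No crossing of the closing segment or of the sub-arc is forced, so no contradiction arises. The paper's mechanism is different and uses two ingredients you omit: (a) the translated curves $R-(i\pi,0)$, which are disjoint from $R$ by uniqueness of solutions (injectivity of $I$ on all of $\re$) and disjoint from $S$ because $\vartheta>-\pi/2$ while $\theta-i\pi<-\pi/2$, hence lie entirely in one of the two components $\mathcal{U}$, $\mathcal{D}$ cut out by the Jordan curve $T=R\cup S$; and (b) the fact that $R\cap S=\{(0,1)\}$, which rests on the axial symmetry of positive solutions of the Yamabe equation. Comparing the exit points of $R$ and of $R-(\pi,0)$ on each line $\theta=-i\pi$ then gives $x_i>x_{i-1}$ for all $i$ (or $<$ for all $i$) at once. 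Without the curve $S$ and the translates, the essential rigidity is missing.

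Your argument for $x_0\le 1$ starts correctly (at $a_0$ the coefficient $h$ vanishes, so $x_0>1$ forces a strict local maximum of $w_{d_0}$ at $a_0$, hence a local minimum in $(0,a_0)$), but the step "one locates some $d^*>d_0$ at which a critical point coincides with $a_0$" is not justified by implicit-function-theorem tracking, which is purely local: the tracked maximum moves into $(0,a_0)$ for $d>d_0$ and nothing in your sketch makes it return to $a_0$. What actually closes the argument is a connectedness argument on $(d_0,d_*)$, where $d_*$ is the first parameter after $d_0$ with $\theta(d_*)=-\pi/2$: there $w_{d_*}(a_0)=0$ forces $E(a_0,d_*)>0$, hence (energy monotonicity) no interior local minimum and so $w_{d_*}'<0$ on $(0,a_0)$; meanwhile interior critical points can neither merge (a degenerate critical point would have value in $\{0,\pm1\}$, impossible by uniqueness) nor exit through $a_0$ (since $w_d'(a_0)<0$ for $d\in(d_0,d_*)$), so the sets of $d$ for which $w_d$ does or does not have an interior local maximum are both open, nonempty, and partition $(d_0,d_*)$ --- a contradiction. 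You should supply the non-degeneracy of critical points and this open--closed decomposition; the local perturbation picture alone does not suffice.
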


\begin{proof}
We will show the lemma for $(x_i)$, the proof for $(y_j)$ being entirely analogous. First we show that $x_0\leq 1$. Notice that if $x_0 >1$, then the solution $w_{d_0 }$ has a local maximum at $a_0$.  Since $w_{d_0}$ also has a local maximum at $0$ then it must have a local minimum in $(0,a_0 )$. Let $d_*$ be
the first value after $d_0$ such that $\theta (d_* )= - \frac{\pi}{2}$.  Since $w_{d_* } (a_0 )=0$ we know from
the comments after Lemma 4.1 that $w_{d_*}$ cannot have a local minimum in $(0, a_0 )$ and
so $w_{d_*}' <0 $ in $(0, a_0 )$. Therefore if $d$ is close to $d_*$, then $w_d$ does not have
a local maximum in $(0,a_0 )$. It follows from the definition of
$d_0$ and $d_*$ that if $d \in (d_0 , d_* )$ then $w_d (a_0 ) >0$ and $w_d ' (a_0 ) <0$. This implies in
particular that if $d$ is close to $d_0$, $d >d_0$, then $w_d$ has a local maximum in $(0,a_0 )$. It also 
implies that if $d\in (d_0 , d_* )$ and $w_d$ does not have a local maximum in $(0, a_0 )$ then $w_d ' <0$
in $(0,a_0 )$. But this yields that $\{ d \in (d_0 , d_* ) : w_d$ does not have a local maximum in $(0,a_0 ) \}$ is
open. It is also clear that  $\{ d \in (d_0 , d_* ) : w_d$  has a local maximum in $(0,a_0 ) \}$ is open. From the previous comments both are non-empty and give a partition of $(d_0 , d_* )$. This yields a contradiction
and therefore $x_0 \leq 1$.

Now, to prove the monotonicity of the sequence, for each $i\in\N$ $i\neq 0$, define the curves
\[
R-(i\pi,0):=\{R(d)-(i\pi,0)\;:\;d\geq 1\},
\]
which is just the curve $R$ displaced to the left by $i\pi$. We claim that $R-(i\pi,0)\cap T=\emptyset$ for every $i\geq 1$. Indeed, as $\vartheta>-\pi/2$, the curves $R-(i\pi,0)$ and $S$ never intersect. Now, if $R$ and $R-(i\pi,0)$ intersect, then we contradict the fact that the curve $I$ is simple.  Hence, the curves $R-(i\pi,0)$ do not intersect $T$ and they are contained either in $\mathcal{D}$ or in $\mathcal{U}$. As $d_0$ is the exit time of $R$ from the line $\theta=0$, then $d_0$ is the exit time of $R-(\pi,0)$ from the line $\theta=-\pi$. Consider the points $R(d_0)=(0,x_0)$, $R(d_1)=(-\pi,x_1)$ and $R(d_0)-(\pi,0)=(-\pi,x_0)$. Observe $x_0<x_1$ if and only if $R-(\pi,0) \subset \mathcal{D}$, since the curve $R$ restricted to the interval $[d_0,d_1]$ is homotopic in $\R\times\R_{>0}\smallsetminus\{(-\pi,x_0),(-\pi,x_1)\}$ to the straight segment joining $(0,x_0)$ and $(-\pi,x_1)$. As $T$ divides $\R\times\R_{>0}$ only in two disjoint open sets and as $x_0\neq x_1$, this also implies $x_1<x_0$ if and only if $R-(\pi,0)\subset\mathcal{U}$. Suppose now that $R-(\pi,0)$ is under $R$, i. e.
$R-(\pi,0)\subset \mathcal{D}$, so that $x_0<x_1$. Now for any $i\geq 1$ we look at the exit times of $R-(\pi,0)$ and $R$ at
the line $\theta =-i\pi$; since $R-(\pi,0)\subset \mathcal{D}$ it follows that $x_i  > x_{i-1}$ and the sequence is increasing.

In case $x_0 > x_1$, in a similar way one proves that the sequence is decreasing.

\end{proof}

One can prove  Theorem \eqref{Th:Nodal Yamabe ODE}  in case either of the sequences is decreasing using
 the next result.

\begin{lemma}\label{Lemma:sequences decreasing}
If the sequence  $ (x_i) $ is decreasing, then for any integer $k\geq 1$ there exists $\alpha_k \in (d_k , d_{k+1})$
such that $w_{\alpha_k}'(0)=0=w_{\alpha_k}'(\pi)$ and $w_{\alpha_k}$ has exactly k zeroes in $[0,\pi]$ which are all in $(0,a_0 )$. A similar statement  holds true when the sequence $(y_i)$ is decreasing, but now with the zeroes lying in $(a_0,\pi)$.
\end{lemma}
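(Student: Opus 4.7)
I treat the case of $(x_i)$ decreasing; the $(y_j)$ case is symmetric, exchanging the forward and backward shootings. A full BVP solution with exactly $k$ zeroes, all in $(0,a_0)$, is produced by gluing at $r=a_0$ a forward piece $w_{\alpha_k}$ and a backward piece $\widetilde{w}_{(-1)^k c^*}$, which amounts to a matching $I(\alpha_k)=(-1)^k J(c^*)$ with $n(\alpha_k)=k$ and $N(c^*)=0$. In angle--radius coordinates this is the same as an intersection of $R$ with the shifted curve
\[
\sigma_k(c):=(\vartheta(c)-k\pi,\,|J(c)|).
\]
I restrict to $c\in(0,1]$; by the backward analogue of Lemma \ref{Lemma:no zeroes} (same energy argument, now using that $h/\sin r<0$ on $[a_0,\pi]$ makes $E$ decreasing as $r$ decreases, together with $G(c)\le 0$ for $c\le 1$), $\widetilde{w}_c>0$ on $[a_0,\pi]$, so $N(c)=0$ and $\vartheta(c)\in(-\pi/2,\pi/2)$ throughout.

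\smallskip

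\noindent From the proof of Lemma \ref{Lemma:sequences}, the decreasing hypothesis gives $R-(k\pi,0)\subset\mathcal{U}$ for every $k\ge 1$; in particular $\sigma_k(1)=R(1)-(k\pi,0)=(-k\pi,1)\in\mathcal{U}$. On the other hand, linearizing the backward ODE at the trivial solution yields $|J(c)|\to 0$ as $c\to 0^+$, while the angle of $\sigma_k(c)$ stays trapped in the compact set $[-k\pi-\pi/2,-k\pi+\pi/2]$. On this compact angle window $T=R\cup S$ has a strictly positive minimal radius by continuity and non-vanishing of $|I|$ and $|J|$, so $\sigma_k(c)$ slips into $\mathcal{D}$ for $c$ small enough. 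Continuity then forces a crossing $\sigma_k(c^*)\in T$ at some $c^*\in(0,1)$. The crossing cannot lie on $S$: otherwise $\sigma_k(c^*)=S(c')$ would give $J(c^*)=(-1)^k J(c')$ for some $c'\ge 1$, contradicting injectivity of $J$ when $k\ge 1$ by the same argument used for the simplicity of the translates of $R$. Writing $\sigma_k(c^*)=R(\alpha_k)$, the glued function solves the BVP; $\theta(\alpha_k)=\vartheta(c^*)-k\pi\in(-k\pi-\pi/2,-k\pi+\pi/2)$ and Proposition \ref{Lemma:argument formula} give $n(\alpha_k)=k$, and $N(c^*)=0$ together with $|I(\alpha_k)|=|J(c^*)|>0$ excludes any further zeroes.

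\smallskip

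\noindent To place $\alpha_k$ inside $(d_k,d_{k+1})$, I select the crossing with $\vartheta(c^*)\le 0$, say as the first crossing from $c=1$ along the non-positive branch of $\vartheta$; then $\theta(\alpha_k)<-k\pi$ strictly since $\sigma_k(1)\notin T$, so the definition of $d_k$ as the largest pre-image of $-k\pi$ gives $\alpha_k>d_k$, while $\theta(\alpha_k)>-k\pi-\pi/2>-(k+1)\pi$ gives $\alpha_k<d_{k+1}$. The main technical obstacle is the step ``$\sigma_k$ enters $\mathcal{D}$'': it requires combining the linearization estimate $|J(c)|\to 0$ with the compactness-based positive lower bound for the radius of $T$ over the trapped angle window. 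A secondary subtlety is guaranteeing that a crossing with $\vartheta(c^*)\le 0$ actually exists; in the worst case this requires analyzing the tangent direction of $\sigma_k$ at $c=1$ via linearization around the equilibrium $\widetilde{w}_1\equiv 1$, or running the same argument on the $c\ge 1$ branch and using both halves to cover all cases.
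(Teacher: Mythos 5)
Your route is genuinely different from the paper's. The paper never glues a backward solution at $a_0$ for this lemma: it works only with the forward family, defines the open set $A\subset[d_k,d_*]$ of initial data whose solution has, after its $k$-th zero, a local extremum with value in $(-1,1)$, takes $\alpha_k$ to be the right endpoint of the component of $A$ containing $d_k$, and then must show directly that $w_{\alpha_k}$ is monotone and bounded with bounded derivative near $\pi$, invoking Lemmas A.1--A.2 of the Appendix to conclude $w_{\alpha_k}\in C^2[0,\pi]$ with $w_{\alpha_k}'(\pi)=0$. Your matching of $R$ with the translate of the $c\in(0,1]$ branch of $J$ buys a real simplification: the backward piece $\widetilde{w}_{(-1)^k c^*}$ satisfies the boundary condition at $\pi$ by construction, so the entire regularity analysis at the singular endpoint disappears. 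The skeleton of your existence argument is sound: $\widetilde{w}_c>0$ on $[a_0,\pi]$ for $c\in(0,1]$ by the backward analogue of Lemma \ref{Lemma:no zeroes}, so $\vartheta(c^*)\in(-\pi/2,\pi/2)$; the preimage of the angle window under $\theta$ is compact by Corollary \ref{Cor:Limit angle}, so $T$ has positive minimal radius there and $\sigma_k$ enters $\mathcal{D}$ (note $|J(c)|\to 0$ already follows from continuity of $J$ at $0$); any crossing must lie on $R$ since $\vartheta>-\pi/2$ on $S$ while $\sigma_k$ has angle $<-\pi/2$; and any crossing gives $\theta(\alpha_k)\in(-k\pi-\pi/2,-k\pi+\pi/2)$, hence $n(\alpha_k)=k$ by Proposition \ref{Lemma:argument formula}, $N(c^*)=0$, $w_{\alpha_k}(a_0)\neq 0$, and exactly $k$ zeroes, all in $(0,a_0)$.

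There are, however, two genuine gaps. First, the localization $\alpha_k\in(d_k,d_{k+1})$ is not established. Even granting a crossing with $\vartheta(c^*)\le 0$, the inference ``$\theta(\alpha_k)<-k\pi$, hence $\alpha_k>d_k$'' is a non sequitur: the definition of $d_k$ as the \emph{largest} preimage of $-k\pi$ guarantees $\theta(d)<-k\pi$ for all $d>d_k$, but not the converse, since $\theta$ is not known to be monotone and may dip below $-k\pi$ before $d_k$. (The half $\alpha_k<d_{k+1}$ is fine, because $\theta(\alpha_k)>-(k+1)\pi$ forces $\alpha_k\le d_{k+1}$.) Moreover, the existence of a crossing with $\vartheta(c^*)\le 0$ is itself only conjectured, as you acknowledge; fortunately that selection only matters for the placement, since the zero count is independent of the sign of $\vartheta(c^*)$. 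Second, the input $\sigma_k(1)=(-k\pi,1)\in\mathcal{U}$ requires $R-(k\pi,0)\subset\mathcal{U}$ for \emph{every} $k\ge 1$, whereas the proof of Lemma \ref{Lemma:sequences} only establishes the case $k=1$ via the homotopy argument; the general case (asserted without detail in the paper's proof of Theorem \ref{Th:Nodal Yamabe ODE}) needs an inductive or homotopy argument of its own. The net effect is that your argument, once the translate containment is supplied, does produce a solution with $w'(0)=w'(\pi)=0$ and exactly $k$ zeroes all in $(0,a_0)$, but it does not prove the full statement of the lemma because the membership $\alpha_k\in(d_k,d_{k+1})$ is left unproven.
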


\begin{proof} By Lemma \ref{Lemma:sequences}, we have that $x_i  <1 $ for all $i\geq 1$. Now, for $k\geq 1$ fixed, the solution $w_{d_k}$ must have exactly $k$ zeroes in
$(0,a_0 )$ and a local extremum in $a_0$ with value $x_k$ or $-x_k$. It follows that there must be another local extremum
between the last zero and $a_0$ (for instance if $w_{d_k} (a_0 ) >0$, then $a_0$ is a local minimum of
$w_{d_k}$ and therefore there must be a local maximum between the last zero and $a_0$).

Let $d_*$ be the first time after $d_k$ such that the curve $I$ touches the $y$-axis.
Note that $\theta (d_* ) = \theta  (d_k ) -\pi /2  =-k\pi - \pi /2$.
The solution $w_{d_*}$ has exactly
$k$ zeroes in $(0,a_0 )$ and another one at $a_0$. Recall that the energy is decreasing in the interval $[0,a_0 ]$ and increasing in the interval $[a_0 , \pi ]$. Since $E(a_0 , d_* ) >0$, we have that 
$E(r, d_* ) >0$ for all $r\in [0 , \pi ]$. We define the  set $A \subset [d_k , d_* ]$  by saying that $d \in A$ if and only if  $w_d$
has at least $k$ zeroes in $(0,a_0 )$ and after the $k$-th zero (and before the $(k+1)$-th zero) it has a local extremum 
$r_d$ with value in $(-1,1)$. This again implies that $w_d$ must have another extremum before $r_d$ and after the
$k$-th zero. Observe that $d_k \in A$ and that if $d$ is in the closure of $A$ then $w_d$ must have at least $k$ zeroes
in $(0, a_0]$. Notice that $A$ is open in $[d_k,d_*]$ and that $d_* \notin A$: since $w_{d_*} (a_0 ) =0$ the energy must be positive 
in $[0, \pi ]$ and therefore $w_{d_*}$ cannot have an extremum with value in $(-1,1)$. But for $d$ close to 
$d_*$ we also have $E(a_0 ,d )>0$ and therefore $d\notin A$. Let $[d_k , \alpha_k )$ be the connected component
of $A$ that contains $d_k$. Note that $\alpha_k \in (d_k , d_* )$. Since $\alpha_k$ is in the closure of $A$ we have that
$w_{\alpha_k}$ must have at least $k$-zeroes in $(0,a_0 ]$. Let $\widehat{r}_k$ be the $k$-th zero of $w_{\alpha_k}$. Assume
that $w_{\alpha_k}$ has a $(k+1)$-th zero $\widehat{r}_{k+1}$. Then for $d$ close to $\alpha_k$ the solution
$w_d$ must also have at least $k+1$ zeroes in $(0,\pi)$. If $d<\alpha_k$ then $d\in A$. So $w_d$ has an extremum with
value in $(-1,1)$ between its $k$-th zero and its $(k+1)$-th zero. Let $r_d$ be this extremum of $w_d$. Pick a
sequence $d_i \rightarrow \alpha_k$ such that the sequence $r_{d_i}$ converges. We must have 
$r_0= \lim_{i\rightarrow\infty} r_{d_i}  \in [\widehat{r}_k , \widehat{r}_{k+1} ]$, $w_{\alpha_k } (r_0 ) \in [-1,1]$, $w_{\alpha_k} ' (r_0 )=0$. This, the fact that $\alpha_k\neq -1,0,1$ and uniqueness of the solutions imply that $r_0= \lim_{i\rightarrow\infty} r_{d_i}  \in (\widehat{r}_k , \widehat{r}_{k+1} )$ and $w_{\alpha_k } (r_0 ) \in (-1,1)$. This would mean that
$\alpha_k \in A$ contradicting that $[d_k , \alpha_k )$ was a connected component of the open set $A$. Therefore
$w_{\alpha_k}$ has exactly $k$ zeroes in $(0,\pi )$. 

Assume that  $w_{\alpha_k}$ is monotone and unbounded after $\widehat{r}_k$. Then the energy blows to infinity after
$a_0$. Thus for $d$ close to $\alpha_k$, $d<\alpha_k$, we have that there exists $r>a_0$ such that after  its $k$-th zero, $w_d$ is
monotone until $r$ and $E(r,d) >0$. Since the energy is increasing after $r$ this implies, as before, that $w_d$ cannot have
an extremum with value in $(-1,1)$ after its $k$-th zero. Therefore $d\notin A$, which contradicts the definition of
$\alpha_k$. Thus, $w_{\alpha_k}$ is monotone and bounded after $\widehat{r}_k$ or it has an extremum after $\widehat{r}_k$. 
Assume the second case, so $w_{\alpha_k}$ has a local extremum at some point $r_e \in (\widehat{r}_k , \pi )$. We can see
exactly as before that if $w_{\alpha_k}$ does not have any critical point after $r_e$ then it must be monotone and bounded in $(r_e , \pi )$. If $w_{\alpha_k}$ has at least two local extrema after $\widehat{r}_k$, $r_e , r_f$ then since
$\alpha_k \notin A$ we must have that $w_{\alpha_k} (r_e ), w_{\alpha_k} (r_f ) \notin (-1,1)$. But this implies that
$w_{\alpha_k} (r_e )$ and  $w_{\alpha_k} (r_f )$ have different signs, implying that $w_{\alpha_k}$ has a
$(k+1)$-th zero. We have already proved that this is not the case. So we proved that there exists $r>a_0$ such
that $w_{\alpha_k}$ is monotone, bounded and does not have a zero in $(r, \pi )$. Moreover, if $w_{\alpha_k}'$ is
not bounded in $(r,\pi )$ then the energy becomes positive and exactly as before we would prove that for
$d$ close to $\alpha_k$, $d<\alpha_k$, $d \notin A$. Therefore $w_{\alpha_k}'$ is also bounded in $(r, \pi )$. This implies that
$w_{\alpha_k}$ is a $C^2$-function on $[0,\pi ]$ and $w_{\alpha_k} ' (\pi )=0$. This is elementary but the details are
lengthy, so we will write the details in Appendix A at the end of the article. In Lema A.1 we will prove that
$w_{\alpha_k}$ is actually $C^1$ and $w_{\alpha_k}' (\pi )=0$. And in Lemma A.2 we will prove that
$w_{\alpha_k}$ is actually $C^2$ in $[0,\pi ]$, giving a bona fide solution with
exactly $k$ zeroes. This proves the lemma when $(x_i )$ is decreasing and the proof when $(y_i )$ is 
decreasing is entirely similar.  
\end{proof}

We have now everything to prove Theorem \ref{Th:Nodal Yamabe ODE}

\smallskip

\begin{proof}[\textbf{Proof of Theorem \ref{Th:Nodal Yamabe ODE}.}]
By the preceding lemma, in case  either $(x_i)$ of $(y_j)$ is decreasing Theorem \ref{Th:Nodal Yamabe ODE} holds. Therefore
we can assume that both sequences $(x_i)$ and $(y_j)$ are increasing.
We consider the simple  curve $T =R \cup S$ that divides the open half space in the two open
connected subsets $\mathcal{U}$ and $\mathcal{D}$.

By the hypothesis (see the proof of Lemma \ref{Lemma:sequences decreasing}), for any positive integer $k$ the curve $R-k(\pi ,0) \subset \mathcal{D}$, and $S -k(\pi ,0)$ must intersect $\mathcal{U}$ since it is over $S$. Then $T-k(\pi ,0)$ must intersect $T$. This implies that $R$ must intersect $S -k(\pi ,0)$. Let $d_R >1$ and
$c_S >1$ be the points such that $R(d_R ) = S(c_S ) -(k\pi ,0)$.

Consider the homotopy equivalence given by the projection $p:\R^2 - \{ (0,0) \} = \R \times \S^1 \rightarrow \S^1$. The
curve $I$ restricted to $[1, d_R ]$ projects to a curve in $\S^1$ homotopic with fixed endpoints to the curve
$c:[0, -\theta ( d_R )] \rightarrow S^1$, $c_1 (t) = (  \cos (-t) , \sin -(t)  )  $. Note that the number of zeroes of $w_{d_R}$
in $[0,a_0 ]$ is equal to  the number of times that the curve $c_1$ intersects the $y$-axis. Similarly the projection of
the curve $J$ restricted to $[1,c_S]$ is homotopic to the curve $c_2 : [0, \vartheta (c_S )]\rightarrow\S^1$, $c_2 (t) =
(\cos t , \sin t )$. And the number of zeroes of $ \widetilde{w}_{c_S}$ in $[a_0 ,\pi ]$ is equal to the number of times $c_2$ 
intersects the $y$-axis.

If $k$ is even then $I(d_R ) = J(c_S )$. In this case, the restriction of $J$ to $[1,c_S ]$ followed by the restriction of
$I$ to $[1,d_R ]$ (in the opposite direction) form a closed curve $C$.  Since $R(d_R ) = S(c_S ) - (k\pi ,0)$ we
have that $\vartheta (c_S ) - \theta (d_R ) = k \pi$.  Then the projection of $C$ to $\S^1$ is homotopic to
$c: [0,k \pi ] \rightarrow \S^1$, $c(t) = (\cos t , \sin t )$. And it follows from the previous comments that 
$w_{d_r} =\widetilde{w}_{c_S}$ is a solution with exactly $k$ zeroes in $[0,\pi ]$.

If $k$ is odd then $I(d_R ) = -J(c_S )$. Now the restriction of $J$ to $[1,c_S ]$ followed by the restriction of
$-I$ to $[1,d_R ]$ (in the opposite direction) form a curve $\widetilde{C}$ from $(1,0)$ to  $(-1,0)$. The
projection of $\widetilde{C}$ to $\S^1$ is homotopic, with fixed endpoints to the curve $c:[0,k \pi ]\rightarrow\S^1$,
$c(t) = (\cos t , \sin t )$. And it now follows from the previous comments that $w_{d_R} = - \widetilde{w}_{c_S}$
is a solution with exactly $k$ zeroes in $[0,\pi ]$.
\end{proof}

\appendix

\section{}

In this appendix we give the details of the last argument in the proof of Lemma 4.7.

\begin{lemma}  Let $P$ be a continuous odd function, $\epsilon >0$,  and $u\in C^2 [\pi - \epsilon , \pi )$ be a solution of

\begin{equation}\label{A1}
u'' + \frac{h(r)}{\pi -r} u' + P(u) =0  \ \ \ \ \ \ \ \ in \ \ \ \ [\pi -\epsilon , \pi )
\end{equation}

\noindent
Assume that
\begin{itemize}
	
	\item[i)] $u$ is monotone and bounded in $[\pi -\epsilon , \pi )$
	
	\item[ii)]$ h<0 $  in $[\pi -\epsilon , \pi ]$ and is continuous
	
	\item[iii)] $u'$ is bounded in $[\pi -\epsilon , \pi )$
	
\end{itemize}

\noindent
Then $u\in C^1 [\pi - \epsilon , \pi ]$ and $u'(\pi )=0$.
\end{lemma}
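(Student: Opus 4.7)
The plan is to show that $u'(r)\to 0$ as $r\to\pi^-$ by a barrier argument that exploits the singular coefficient $h(r)/(\pi-r)$. Once this is established, the monotonicity and boundedness of $u$ give the continuous extension $u(\pi):=\lim_{r\to\pi^-}u(r)$, and then the equation itself, together with $\lim u'=0$, will force $u$ to be $C^1$ up to $\pi$ with $u'(\pi)=0$.

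First I would reduce to the case that $u$ is monotone \emph{increasing}, so that $u'\ge 0$ throughout $[\pi-\epsilon,\pi)$; the decreasing case is entirely symmetric (or can be reduced to the increasing case by replacing $u$ by $-u$, using that $P$ is odd). Since $u$ is bounded and $P$ continuous, there is a constant $M$ with $|P(u(r))|\le M$ on $[\pi-\epsilon,\pi)$. Since $h$ is continuous and strictly negative on the closed interval $[\pi-\epsilon,\pi]$, by shrinking $\epsilon$ I may assume $|h(r)|\ge \tfrac{1}{2}|h(\pi)|=:h_0>0$ on $[\pi-\epsilon,\pi]$. Rewriting the ODE,
\[
u''(r)=\frac{|h(r)|}{\pi-r}\,u'(r)-P(u(r))\ge \frac{h_0}{\pi-r}\,u'(r)-M.
\]

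The key step is the following barrier argument. Suppose for contradiction that $\alpha:=\limsup_{r\to\pi^-}u'(r)>0$, and pick $r_0\in[\pi-\epsilon,\pi)$ with $u'(r_0)>\alpha/2$, chosen close enough to $\pi$ that $\frac{h_0\,\alpha/2}{\pi-r_0}>M+1$. I claim $u'(r)\ge \alpha/2$ on the entire interval $[r_0,\pi)$. Indeed, if not, let $r^*:=\inf\{r>r_0:u'(r)=\alpha/2\}<\pi$; then $u'\ge \alpha/2$ on $[r_0,r^*]$ and $u''(r^*)\le 0$. But the displayed inequality at $r^*$ gives $u''(r^*)\ge \frac{h_0}{\pi-r^*}\cdot\frac{\alpha}{2}-M>0$, a contradiction. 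Therefore $u'\ge\alpha/2$ on $[r_0,\pi)$, and the inequality $u''\ge \frac{h_0\alpha/2}{\pi-r}-M$ holds there. Integrating from $r_0$ to $r$ yields
\[
u'(r)\ge u'(r_0)+\frac{h_0\alpha}{2}\log\frac{\pi-r_0}{\pi-r}-M(r-r_0),
\]
whose right-hand side tends to $+\infty$ as $r\to\pi^-$. This contradicts the hypothesis that $u'$ is bounded. Hence $\alpha=0$, and since $u'\ge 0$, we conclude $\lim_{r\to\pi^-}u'(r)=0$.

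It only remains to package the conclusion. Monotonicity and boundedness give $L:=\lim_{r\to\pi^-}u(r)$, and setting $u(\pi):=L$ extends $u$ continuously to $[\pi-\epsilon,\pi]$. By the mean value theorem, for each $r<\pi$ there exists $\xi_r\in(r,\pi)$ with $\frac{u(\pi)-u(r)}{\pi-r}=u'(\xi_r)$; letting $r\to\pi^-$ gives that the left derivative of $u$ at $\pi$ equals $0$, matching the limit $\lim u'=0$. So $u'$ extends continuously to $\pi$ with $u'(\pi)=0$, and $u\in C^1[\pi-\epsilon,\pi]$.

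The main obstacle is the barrier step: one has to rule out the pathological scenario in which $u'$ oscillates, passing many times through small positive values while spending some time near a positive level. The argument above sidesteps this by showing that, as soon as $u'$ rises above $\alpha/2$ near the singularity, the singular term in the ODE traps it there and then blows it up logarithmically. The hypotheses $h<0$ near $\pi$ and $u$ monotone are both essential for this trapping: they guarantee that the two terms $\frac{|h|}{\pi-r}u'$ and $-P(u)$ cooperate in sign rather than competing.
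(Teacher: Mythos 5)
Your proposal is correct and follows essentially the same route as the paper's proof: reduce to the increasing case via the oddness of $P$, assume $u'\not\to 0$, use the singular coefficient to trap $u'$ above a positive level near $\pi$ and force it to blow up (you get a logarithmic lower bound where the paper integrates $(\ln u')'>\lambda/(\pi-t)$ to get a power-law one, but either contradicts hypothesis (iii)), and finish with the mean value theorem to extend $u'$ continuously with $u'(\pi)=0$. Your explicit barrier step with $r^*$ is in fact a slightly more careful rendering of the paper's terse "taking $\epsilon$ smaller if necessary we can assume $u'(t)>\delta$."
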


\begin{proof} Assume that $u$ is monotone increasing (in case it is decreasing one could simply consider
	$-u$, which is also a solution since $P$ is odd). We have $u' \geq 0$ and assume that $\lim_{t\rightarrow \pi } u'(t) \neq 0$. Then there exists 
	$\delta >0$ and a sequence $t_k \rightarrow \pi$ such that $u' (t_k ) > \delta$.
	For $k$ large this implies that $u''(t_k )>0$. Then taking $\epsilon$ smaller if necessary we can
	assume that $u'(t) >\delta$  $\forall t \in [\pi - \epsilon , \pi )$. 
	And therefore $u'$ is increasing in $[\pi -\epsilon , \pi )$.
	
	Choosing again $\epsilon$ smaller if necessary and letting $\lambda =-h(\pi )/2 >0$, we can assume that $u''=-\frac{h(t)}{\pi - t} u' -P(u)$
	$>\frac{\lambda}{\pi -t} u'$.  Then we have that

	$$\frac{u''}{u'} > \frac{\lambda}{\pi -t} .$$

	Then 
	
	$$(ln (u' ))' > (-\lambda ln (\pi -t))'$$
	
	\noindent
	and we can deduce that for some constant $c \in \R$ 
	
	$$ln(u') > -\lambda ln (\pi -t ) + c .$$
	
	Hence, 
	
	$$u' > e^c (\pi -t)^{-\lambda}.$$
	
	This implies that $u'$ is not bounded which is a contradiction that came from assuming that
	$\lim_{t\rightarrow \pi } u'(t) \neq 0$. Therefore $\lim_{t\rightarrow \pi } u'(t) = 0$.
	
	Moreover,
	
	$$\lim_{t\rightarrow 0} \frac{u(\pi -t ) -u(\pi ) }{-t} =\lim_{t\rightarrow 0} \frac{u(\pi ) -u(\pi -t ) }{t} = \lim_{t\rightarrow 0} \left( \lim_{\delta \rightarrow 0}\frac{u(\pi -\delta ) -u(\pi  -t) }{t-\delta}  \right)$$
	
	But
	
	$$\frac{u(\pi -\delta ) -u(\pi  -t) }{t-\delta}  = u'(\beta )$$ 
	
	\noindent
	for some $\beta \in (\pi -t  , \pi -\delta )$, which implies that the limit is 0 and therefore $u\in C^1 [\pi -\epsilon , \pi]$.

\end{proof}

\begin{lemma} Let $r>0$ and $u\in C^1 (\pi -r , \pi] \cap C^2 (\pi -r , \pi )$ be a solution of equation (\ref{A1}) such that $u'( \pi )=0$. Then $u\in C^2 (\pi - r , \pi ]$ and it solves equation (\ref{A1}) on $(\pi - r , \pi ]$.
\end{lemma}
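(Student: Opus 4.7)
The plan is to move the singularity from $t=\pi$ to $s=0$ via the change of variables $s=\pi-t$, $v(s):=u(\pi-s)$, which transforms (\ref{A1}) into
\[
v''(s)+\frac{\widetilde h(s)}{s}\,v'(s)+P(v(s))=0\quad\text{on }(0,r),
\]
where $\widetilde h(s):=-h(\pi-s)$. Carrying over the sign hypothesis from Lemma A.1 gives $\widetilde h(0)=-h(\pi)>0$, and the data becomes $v\in C^1[0,r)\cap C^2(0,r)$ with $v(0)=u(\pi)$ and $v'(0)=0$. The goal is then reduced to showing $v\in C^2[0,r)$ and that the transformed equation holds at $s=0$, from which the original claim follows by undoing the change of variables.

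The key observation I would exploit is that the equation is a \emph{linear first-order ODE in} $\phi:=v'$, namely $\phi'(s)+\widetilde h(s)\phi(s)/s=-P(v(s))$. I would then introduce the integrating factor
\[
\mu(s):=s^{\widetilde h(0)}\psi(s),\qquad \psi(s):=\exp\!\left(\int_0^s\frac{\widetilde h(\tau)-\widetilde h(0)}{\tau}\,d\tau\right),
\]
noting that since $h$ is smooth in the application of this lemma, $\widetilde h$ is $C^1$, the integrand is bounded, and $\psi$ is a $C^1$ function with $\psi(0)=1$. Then $(\mu\phi)'=-\mu\,P(v)$, and integrating from $\epsilon$ to $s$ while letting $\epsilon\downarrow 0$ kills the boundary term $\epsilon^{\widetilde h(0)}\psi(\epsilon)v'(\epsilon)$, because $\widetilde h(0)>0$ and $v'$ is continuous at $0$ with $v'(0)=0$. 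This yields the explicit representation
\[
v'(s)=-\frac{1}{\mu(s)}\int_0^s\mu(\tau)\,P(v(\tau))\,d\tau.
\]

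From this formula I would extract the asymptotic behavior near $0$: continuity of $P\circ v$ at $0$ and the fact that $\mu(\tau)\sim\tau^{\widetilde h(0)}$ give
\[
v'(s)=-\frac{P(v(0))}{1+\widetilde h(0)}\,s+o(s),\qquad \frac{v'(s)}{s}\longrightarrow -\frac{P(v(0))}{1+\widetilde h(0)}.
\]
Substituting this limit back into the ODE shows $v''(s)$ tends to $-P(v(0))/(1+\widetilde h(0))$ as $s\downarrow 0$; combined with $v\in C^2(0,r)$ and $v'(0)=0$, the usual characterization of the derivative gives $v\in C^2[0,r)$, and the equation holds at $s=0$ by continuity of every term. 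Returning to the $t$-variable proves $u\in C^2(\pi-r,\pi]$ and that it solves (\ref{A1}) on $(\pi-r,\pi]$. The main obstacle I expect is precisely the $0\cdot\infty$ indeterminate form of $h(t)u'(t)/(\pi-t)$ at the singular endpoint; the integrating-factor representation bypasses this by converting the problematic term into a convergent integral whose asymptotic behaviour follows from the continuity of the data alone, and the sign condition $\widetilde h(0)>0$ is exactly what makes the boundary term vanish in the limit.
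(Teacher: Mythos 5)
Your argument is correct, and it reaches the same key asymptotic as the paper --- namely $\lim_{s\downarrow 0} v'(s)/s = -P(v(0))/(1+\widetilde h(0))$, which in the original variable is exactly the paper's limit $\lim_{r\to 0}\bigl(-u'(\pi-r)/r\bigr) = -P(u(\pi))/(1-h(\pi))$ --- but by a genuinely different route. The paper proves this limit indirectly: it assumes $\liminf$ (resp.\ $\limsup$) of $-u'(\pi-r)/r$ differs from the candidate value $C$, deduces monotonicity of $-u'(\pi-r)/r$ near the singularity from the sign of $\bigl(-u'(\pi-r)/r\bigr)'$, and derives a contradiction with the mean value theorem. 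You instead obtain the limit constructively from the integral representation $v'(s)=-\mu(s)^{-1}\int_0^s\mu\,P(v)$, with the sign condition $\widetilde h(0)=-h(\pi)>0$ doing double duty (killing the boundary term at $\epsilon$ and making $\mu$ integrable at $0$). Your method is shorter, gives a quantitative rate $v'(s)=-\tfrac{P(v(0))}{1+\widetilde h(0)}s+o(s)$, and avoids the case analysis; its only cost is the factorization $\mu(s)=s^{\widetilde h(0)}\psi(s)$ with $\psi$ continuous at $0$, which needs slightly more than the bare continuity of $h$ assumed in Lemma A.1 (a Dini-type condition on the modulus of continuity, or $C^1$ as you note); since $h$ is real-analytic in the application, this is harmless, and one could in any case recover the merely-continuous case by sandwiching $\mu(\tau)/\mu(s)$ between $(\tau/s)^{\widetilde h(0)\pm\varepsilon}$. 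The final step --- feeding the limit of $v'(s)/s$ back into the ODE to identify $\lim v''(s)$ with the difference-quotient limit $v''(0)$ --- is the same in both proofs.
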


\begin{proof} Note that if $u \in C^2 (0, \pi ]$  then we would have 
	$u''(\pi ) =\lim_{r\rightarrow 0} \frac{-u'(\pi -r)}{r} = \frac{-P(u(\pi ))}{1- h(\pi )}$. Let $C= \frac{-P(u(\pi ))}{1-h(\pi )}$.

	Assume then that 
	
	$$\lim_{r \rightarrow 0} \frac{-u'(\pi - r)}{r} =C$$

	Then of course $u''( \pi ) =C$. Also

	$$u''( \pi - r) = -\frac{h(\pi - r)}{r} u' (\pi -r )-P(u (\pi - r) ) \rightarrow h(\pi ) C -P(u(\pi )) =C,$$
	
	\noindent
	and we have that $u\in C^2 (\pi - r,\pi ]$ and it solves equation (\ref{A1}) on $(\pi -r ,\pi ]$.
	
	Therefore we only need to prove 
	
	$$\lim_{r \rightarrow 0} \frac{- u'(\pi - r)}{r} =C$$

	Assume it is not the case. Assume for instance that $\liminf_{r\rightarrow 0}  \frac{-u'(\pi - r)}{r}  < C$
	(the case $\limsup_{r\rightarrow 0}  \frac{-u'(\pi - r)}{r}  > C$ is similar).

	Then there  exists $\delta >0$ such that for any $r_0 >0$ there exists $r_1 \in (0, r_0 )$ such that 
	
	$$ \frac{-u'(\pi - r_1 )}{r_1} <C -\delta . $$
	
	\noindent
	For any $r_1$ verifying the previous inequality we have
	
	$$u''(\pi - r_1 ) = -P(u(\pi - r_1 )) - h(\pi - r_1 ) \frac{u'(\pi - r_1 )}{r_1} > -P(u(\pi - r_1 )) + h(\pi - r_1 ) (C-\delta ).$$
	
	\noindent
	But $-P(u(\pi - r_1 )) + h(\pi - r_1 ) C \rightarrow C$. Therefore by taking $r_1 $ small enough we can assume that 
	$u''( \pi - r_1 )>C$.

	Since
	
	$$\left( \frac{-u'(\pi - r)}{r} \right) ' =\frac{u''(\pi - r) r + u'(\pi - r)}{r^2} = \frac{1}{r} (u'' (\pi - r) + u' (\pi - r )/r)$$
	
	\noindent
	we have that $(-u' (\pi -r)/r)' (r_1 ) >0$. But this implies that $-u' (\pi -r )/r$ is $<C -\delta $ and 
	increasing in $(\pi -r_1 , \pi )$.
	Let $D = \lim_{r\rightarrow 0} -u' (\pi -r ) /r \leq  C-\delta$. Then
	
	$$\lim_{r\rightarrow 0} u''(\pi - r) = h(\pi )D -P(u(\pi )) \geq  h( \pi )C - h(\pi ) \delta -P(u(\pi ))= C - h(\pi ) \delta >C .$$

	We have that for $r<r_1$
	
	$$\lim_{\alpha \rightarrow 0} \frac{-u'(\pi - r) +u'(\pi - \alpha )}{r-\alpha }= \frac{-u'(\pi - r)}{r} < C -\delta .$$
	
	But also by the mean value theorem there exists $r_{\alpha} \in (\alpha , r)$ such that 
	
	$$\frac{-u'(\pi - r)  + u'(\pi - \alpha )}{r-\alpha } = u''(r_{\alpha }) >C$$
	
	This is a contradiction which came from assuming that $\liminf_{r\rightarrow 0}  \frac{-u'(\pi - r)}{r}  < C$.
	Therefore $\liminf_{r\rightarrow 0}  \frac{-u'(\pi - r)}{r}  \geq  C$ and 
	similarly $\limsup_{r\rightarrow 0}  \frac{-u'(\pi - r)}{r}  \leq C$.
	Therefore $\lim_{r\rightarrow 0}  \frac{-u'(\pi - r)}{r}  = C$ and the lemma is proved.
	
\end{proof}

\end{document}